\newtheorem{theorem}{Theorem}[section]
\theoremstyle{definition}
\newtheorem{definition}[theorem]{Definition}
\newtheorem{corollary}[theorem]{Corollary}
\newtheorem{example}[theorem]{Example}
\theoremstyle{remark}
\numberwithin{equation}{section}
\newcommand{\sgn}{\operatorname{sgn}}
\begin{document}

\title[Modular Inverse and Chinese Remainder Algorithm]{Modular Inverses and\\ Chinese Remainder Algorithm}


\author{W. H. Ko}
\address{}
\curraddr{Park Avenue, Mongkok, HKSAR, China.}
\email{wh\_ko@hotmail.com}
\thanks{}


\subjclass[2010]{Primary 11A25; Secondary 11A05, 11D04}

\date{}

\dedicatory{}

\begin{abstract}
This paper introduces two forms of modular inverses and proves their reciprocity formulas respectively.  These formulas are then applied to formulate new and generalized algorithm for computing these modular inverses.  The same algorithm is also shown to be applicable for the Chinese Remainder problem, i.e., simultaneous linear congruence equations, for co-prime moduli as well as non-co-prime moduli.
\end{abstract}

\maketitle


\section{Introduction}
Modular inverse is one of the basic operations in modular arithmetic, and it is applied extensively in computer science and telecommunications, particularly, in cryptography. However, it is also a time-consuming operation implemented in hardware or software compared with other modular arithmetic operations such as addition, subtraction and even multiplication. Efficient algorithms in calculating modular inverse have been sought after for the past few decades and any improvements will still be welcome.

In implementing an efficient algorithm to calculate the modular inverse of the form $b^{-1} \pmod{2^m}$, Arazi and Qi \cite{AraziQi} have made use of a reciprocity trick, in Algorithm 3b, which can be translated mathematically to Eq.\eqref{eq:reciprocity}.

Although this reciprocity formula, Eq.\eqref{eq:reciprocity}, can be regarded as a modified form of the linear Diophantine equation Eq.\eqref{eq:diophan}, this formula as written in the form of Eq. \eqref{eq:reciprocity} is able to bring more insight into the properties of modular inverse, and thus allowing us formulate new algorithms to calculate modular inverse and to solve the Chinese Remainder problem efficiently, as we are going to show in the following discussions.

In a recent attempt \cite{zhangs} to modernize some ancient Chinese algorithms, Zhang and Zhang have introduced a reciprocity formula similar to Eq. \eqref{eq:reciprocity}.  However, its definition of $f_{a,b}$ and $f_{b,a}$ is slightly different from Eq. \eqref{eq:invdef} and thus its reciprocity formula is different.

The classical definition of modular inverse is reproduced in this section, and a modified definition of modular inverse is introduced in the next section. In Section 3, a new, extended modular inverse will be introduced.  Reciprocity formulas for these modular inverses will also be proved.

In the end of this paper, examples will be provided to illustrate the use of the generalized modular inverse algorithm to calculate the classical and extended modular inverses as well as the Chinese Remainder problem with co-prime moduli and non-co-prime moduli.

\begin{definition}
In modular arithmetic, the classical definition of modular inverse of an integer $a$ modulo $m$ is an integer $x$ such that
\begin{equation} \label{eq:congruence}
ax\equiv 1 \pmod{m}
\end{equation}
The modular inverse is generally denoted as
\begin{equation*}
x \equiv a^{-1} \pmod{m}
\end{equation*}
\end{definition}

Finding the modular inverse is equivalent to finding the solution of the following linear Diophantine equation, where $a,x,k,m \in\mathbb{Z}$,
\begin{equation}\label{eq:diophan}
a\,x-k\,m=1
\end{equation}

\section{Modular Inverse}

\subsection{Modular Inverse Definition}

Slightly different version of modular inverse is used throughout this discussion and it is introduced in this section. This definition of modular inverse will still satisfy the same congruence equation, Eq.\eqref{eq:congruence}.

Furthermore, in order to have a nicer presentation in the equations containing modular inverse,
new notations for modular operation and modular inverse will be used within this paper.

\begin{definition}
For all $a, m \in\mathbb{Z}$, a modulo m, denoted by $(a)_{m}$, is defined as :
\begin{equation}\label{eq:moddef}
(a)_{m}=a-m \bigg\lfloor\dfrac{a}{m}\bigg\rfloor
\end{equation}
Note that \\
\indent \quad \quad
$\begin{cases} 
0 \le (a)_{m} < m & \text{if }m>0\\
m < (a)_{m} \le 0 & \text{if }m<0
\end{cases}$
\end{definition}

\begin{definition}
Let $a, m \in\mathbb{Z}\setminus \{0\} \text{ and } \gcd(a,m)=1$, modular inverse $a$ modulo $m$, denoted by $(a^{-1})_{m}$, is defined as :
\begin{equation}\label{eq:invdef}
(a^{-1})_{m}=x,
\end{equation}
\begin{equation*}
where \begin{cases}
1\le x\le m-1 & \text{if } 1<m \,\&\, a\:x \equiv 1 \pmod{m} \\
m+1\le x \le-1 & \text{if } m<-1 \,\&\, a\,x \equiv 1 \pmod{m} \\
x=\dfrac{1}{2}|m|(\sgn(m)-\sgn(a))+\sgn(a) & \text{if } |m|=1 \\
Undefined & \text{if } a\,m=0 \text{ or } \gcd(a,m) \ne 1
\end{cases}\notag
\end{equation*}
\end{definition}

Obviously, for $|m|>1, (a^{-1})_{m}$ satisfies the congruence requirement that
 $$a(a^{-1})_{m} \equiv 1 \pmod{m}$$.

For $|m|=1$, there are two cases.
\begin{description}
\item[Case 1]$m=1\\
\begin{aligned}
(a^{-1})_{m}&=\dfrac{1}{2}|1|(\sgn(1)-\sgn(a))+\sgn(a)=\dfrac{1}{2}(1-\sgn(a))+\sgn(a)\notag\\
&=\begin{cases} 1 & \text{if } a>0 \\ 0 & \text{if } a<0 \end{cases}\notag
\end{aligned}$
\item[Case 2]
$m=-1\\
\begin{aligned}
(a^{-1})_{m}&=\dfrac{1}{2}|-1|(\sgn(-1)-\sgn(a))+\sgn(a)=\dfrac{1}{2}(-1-\sgn(a))+\sgn(a)\notag\\&=\begin{cases} 0 & \text{if } a>0 \\ -1 & \text{if } a<0 \end{cases}\notag.
\end{aligned}$
\end{description}

Hence for $|m|=1, (a^{-1})_{m}$ also satisfies the requirement that $a(a^{-1})_{m} \equiv 1 \pmod{m}$.

Particularly, note that
\begin{equation*}
(1^{-1})_m=(1)_m=
\begin{cases}
1 & m \ge 1\\
m+1 & m \le -1
\end{cases}
\end{equation*}
\begin{equation*}
(-1^{-1})_m=(-1)_m=
\begin{cases}
m-1 & m \ge 1\\
-1 & m \le -1
\end{cases}
\end{equation*}

It is also interesting to note that the modular inverse for $|m|=1$ as defined by Eq.\eqref{eq:invdef} 
is slightly different from the classical definition that $a^{-1} \pmod{m} =0$ for $|m|=1$ and non-zero $a$.

\subsection{Reciprocity Formula}

The reciprocity relationship between modular inverses seems obvious from the linear Diophantine equation, Eq.\eqref{eq:diophan} and, as a matter of fact, this is the Euclidean algorithm (i.e., iterative division) in disguise. However, this reciprocity formula is not found in any classic text such as \cite{hardy+wright}, nor in any modern text, such as \cite{ireland+rosen, song}. The reciprocity identity first appeared in \cite{joye+paillier} as Lemma 1 in a format similar to Eq.\eqref{eq:reciprocity}. However, only positive integers were discussed for specific type of cryptography applications and the conditions that $|m|=1$ was not taken care of. By the way, on the footnote of p.244 of \cite{joye+paillier}, it stated that Arazi was "the first to take advantage of this folklore theorem to implement fast modular inversions".

\begin{theorem}\label{th:reciprocity}\emph{(Reciprocity formula)}
Let $a, b \in\mathbb{Z} \text{ and } \gcd(a,b)=1$, then 

\begin{equation}\label{eq:reciprocity}
a\,(a^{-1})_{b}+b\,(b^{-1})_{a}=1+a \, b. 
\end{equation} 
\end{theorem}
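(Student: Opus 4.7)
My plan is to prove the identity in three stages: show an $ab$-divisibility, pin down the quotient by a size bound, and dispose of the $|m|=1$ corner cases by hand.

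Write $x := (a^{-1})_b$ and $y := (b^{-1})_a$, and first assume $|a|,|b|>1$ so that the two ``standard'' branches of Eq.~\eqref{eq:invdef} apply. The defining congruences give $ax \equiv 1 \pmod{b}$ and $by \equiv 1 \pmod{a}$. Set
\begin{equation*}
N \;:=\; a\,x + b\,y - 1 - a\,b .
\end{equation*}
Then $N = (ax-1) + b(y-a)$, so $b \mid N$; symmetrically $N = (by-1) + a(x-b)$, so $a \mid N$. Because $\gcd(a,b)=1$, this already yields $ab \mid N$. The whole theorem therefore reduces to the assertion $N=0$, which I would establish by showing $|N|<|ab|$.

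For the bounding step I would split on the signs of $a$ and $b$. In the representative case $a,b>1$, the ranges $1\le x\le b-1$ and $1\le y\le a-1$ give
\begin{equation*}
N \in \bigl[\,a+b-1-ab,\; ab-a-b-1\,\bigr] \;=\; \bigl[-(a-1)(b-1),\;(a-1)(b-1)-2\bigr],
\end{equation*}
and $(a-1)(b-1)<ab$ for $a,b\ge 2$, so this interval sits strictly inside $(-ab,ab)$; the only multiple of $ab$ available is $0$. In the other three sign combinations one uses that $(a^{-1})_b$ now takes values in $[b+1,-1]$ when $b<-1$ (and similarly for $y$), and the endpoints of the corresponding interval for $N$ turn out to be essentially $(a-1)(b+1)$, $-(a+1)(b-1)$, etc. A short check, using nothing more than $|a|,|b|\ge 2$, shows each such interval has length less than $2|ab|$ and lies inside $(-|ab|,|ab|)$, again forcing $N=0$.

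Finally I would handle the corner case where at least one of $|a|,|b|$ equals $1$, since then the third branch of Eq.~\eqref{eq:invdef} takes over and the bounding argument does not apply verbatim. This is pure substitution: e.g.\ if $b=1$ then $(a^{-1})_1 \in \{0,1\}$ according to $\sgn(a)$, while $(1^{-1})_a$ is $1$ or $a+1$ according to $\sgn(a)$, and in each sub-case the left-hand side of Eq.~\eqref{eq:reciprocity} evaluates to $1+a = 1+ab$. The cases $b=-1$ and $a=\pm1$ are symmetric. I expect the main obstacle to be sign bookkeeping in the second stage: each sign pattern for $(a,b)$ flips one of the intervals, so the bounds on $N$ must be recomputed and re-checked against $\pm ab$. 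The arithmetic itself is trivial, but it is the kind of place where one can easily drop a sign if one is not careful.
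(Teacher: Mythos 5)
Your proposal is correct and follows essentially the same route as the paper: the paper sets $U=a(a^{-1})_b+b(b^{-1})_a$, deduces $U\equiv 1\pmod{ab}$ from the two defining congruences and coprimality, writes $U=1+kab$, and forces $k=1$ by the same interval bounds you use (your $N$ is just $(k-1)ab$), before finishing the $|a|=1$ or $|b|=1$ cases by direct substitution exactly as you do. The only difference is bookkeeping ($N=0$ versus $k=1$), not substance.
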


\begin{proof}

\begin{description}
\item[Case 1]$a>1,b>1$\\
Let $U=a(a^{-1})_{b}+b(b^{-1})_{a}$.  Since $U\equiv1\pmod{a}$ and $U\equiv1\pmod{b}$, then $U\equiv1\pmod{a b}.$
That is, $U=1+kab, \text{ where k }\in\mathbb{Z}$. Therefore,\\
$1<a+b \le U=1+k\,a\,b \le a(b-1)+b(a-1)=2a\,b-(a+b)<2a\,b \implies 0<k\,a\,b<2a\,b \implies 0<k<2 \implies k=1$.\\
Therefore $a\,(a^{-1})_{b}+b\,(b^{-1})_{a}=1+a\,b.$

\item[Case 2]$a>1,b<-1$\\
Since $b+1\le (a^{-1})_{b} \le -1 \implies a(b+1) \le a(a^{-1})_{b} \le -a$, and $1 \le (b^{-1})_{a} \le a-1 \implies b(a-1) \le b(b^{-1})_{a} \le b$, therefore, 
$a(b+1) + b(a-1) \le a(a^{-1})_{b} + b(b^{-1})_{a} \le -a+b \implies 2a\,b-(a-b) \le U=1+k\,a\,b \le -(a-b) \implies 2a\,b<2a\,b-(a+1-b) \le k\,a\,b \le -(a+1-b)<0 \implies 0<k<2 \implies k=1$.

\item[Case 3]$a<-1,b>1$\\
Similar to Case 2, and therefore $k=1$.

\item[Case 4]$a<-1,b<-1$\\
Since $b+1 \le (a^{-1})_{b} \le -1 \implies -a \le a(a^{-1})_{b} \le a(b+1)$, and $a+1 \le (b^{-1})_{a} \le -1 \implies -b \le b(b^{-1})_{a} \le b(a+1)$,
therefore $ -a-b \le a(a^{-1})_{b}+b(b^{-1})_{b} \le a(b+1)+b(a+1) \implies -(a+b) \le U=1+k\,a\,b \le 2a\,b+(a+b) \implies 0<-(a+b+1) \le k\,a\,b \le 2a\,b+(a+b+1) < 2a\,b \implies 0<k<2 \implies k=1.$

\item[Case 5]$a=1,b>1$\\
Since $(a^{-1})_b = 1$ and $(b^{-1})_a = 1$, hence, $a(a^{-1})_{b}+b(b^{-1})_{a}= 1 \cdot 1+b \cdot 1=1+b=1+a\,b$.

\item[Case 6]$a=1,b<-1$\\
Since $(a^{-1})_b = b+1$ and $(b^{-1})_a = 0$, hence, $a(a^{-1})_{b}+b(b^{-1})_{a}= 1(b+1)+b \cdot 0=1+b=1+a\,b$.

\item[Case 7]$a=-1,b>1$\\
Since $(a^{-1})_b = 0$ and $(b^{-1})_a = 0$, hence, $a(a^{-1})_{b}+b(b^{-1})_{a}= (-1) \cdot 0+b \cdot 0=0=1+a\,b$.

\item[Case 8]$a=-1,b<-1$\\
Since $(a^{-1})_b = -1$ and $(b^{-1})_a = -1$, hence, $a(a^{-1})_{b}+b(b^{-1})_{a}= (-1) \cdot (-1)+b \cdot (-1)=1-b=1+a\,b$.

\item[Case 9]$|a|=1 \text{ and } |b|=1$\\
$a(a^{-1})_{b}+b(b^{-1})_{a}=a(\frac{1}{2}|b|(\sgn(b)-\sgn(a))+\sgn(a))+b(\frac{1}{2}|a|(\sgn(a)-\sgn(b))+\sgn(b))=b(a+\sgn(b))+\sgn(a)(a-a\, b\, \sgn(b))
=(1+a\,b)-(1-a \,\sgn(a))(1-b \,\sgn(b))=(1+a\,b)-(1-|a|)(1-|b|)=1+a\,b.$
\end{description}
\end{proof}

\begin{corollary}
If $a,b,k \in\mathbb{Z} \text{ and } \gcd(a,b)=1$, then\\
$((k\,a+b)^{-1})_{a}=\begin{cases} (b^{-1})_{a} & |a|>1\\
(b^{-1})_{a}+\frac{1}{2}(\sgn(k\,a+b)-\sgn(b))) & |a|=1
\end{cases}$
\end{corollary}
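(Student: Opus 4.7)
The plan is to split on whether $|a|>1$ or $|a|=1$, verifying the claimed formula directly against Eq.~\eqref{eq:invdef}. Since $\gcd(a,ka+b)=\gcd(a,b)=1$, both inverses in question are defined, subject to the mild implicit assumption that $b\ne 0$ and $ka+b\ne 0$ so that the signs used in the second case make sense.

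For $|a|>1$, I would appeal to uniqueness of the modular inverse within the canonical range specified by Eq.~\eqref{eq:invdef}. By definition, $(b^{-1})_a$ lies in $[1,a-1]$ when $a>1$ and in $[a+1,-1]$ when $a<-1$, which is precisely the range in which $((ka+b)^{-1})_a$ is required to lie. Moreover,
\[
(ka+b)\,(b^{-1})_a \;\equiv\; b\,(b^{-1})_a \;\equiv\; 1 \pmod{a}.
\]
Hence $(b^{-1})_a$ satisfies both defining conditions of $((ka+b)^{-1})_a$, and uniqueness forces equality.

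For $|a|=1$, I would simply substitute into the third branch of Eq.~\eqref{eq:invdef}. For any nonzero integer $x$ (automatically coprime to $a$ when $|a|=1$),
\[
(x^{-1})_a = \frac{1}{2}|a|(\sgn(a)-\sgn(x)) + \sgn(x) = \frac{1}{2}\sgn(a) + \frac{1}{2}\sgn(x).
\]
Applying this to $x=ka+b$ and to $x=b$ and subtracting gives
\[
((ka+b)^{-1})_a - (b^{-1})_a = \frac{1}{2}(\sgn(ka+b)-\sgn(b)),
\]
which is exactly the claimed formula.

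I do not foresee any real obstacle: the corollary reduces to a direct check against the piecewise Eq.~\eqref{eq:invdef}, and in particular the reciprocity formula of Theorem~\ref{th:reciprocity} is not needed. The only points requiring care are the bookkeeping of the correct range for $(b^{-1})_a$ in the $|a|>1$ case (to invoke uniqueness) and the tacit nonvanishing assumptions on $b$ and $ka+b$ in the $|a|=1$ case so that $\sgn$ is defined on both arguments.
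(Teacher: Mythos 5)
Your proof is correct, but for the case $|a|>1$ it takes a genuinely different route from the paper's. The paper, treating this as a consequence of Theorem~\ref{th:reciprocity}, rearranges the reciprocity identity into $(k\,a+b)(b^{-1})_{a}=1+a\bigl(b-(a^{-1})_{b}+k(b^{-1})_{a}\bigr)$ and reads off $((k\,a+b)^{-1})_{a}=(b^{-1})_{a}$; you instead observe directly that $k\,a+b\equiv b\pmod{a}$, so $(b^{-1})_a$ already satisfies the defining congruence for $((k\,a+b)^{-1})_a$, and then invoke uniqueness of the inverse within the canonical range of Eq.~\eqref{eq:invdef}. Both arguments ultimately rest on that same uniqueness step (the paper leaves it implicit), but yours is more elementary and shows that the reciprocity formula is not actually needed here. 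Your $|a|=1$ case is essentially the paper's computation with the third branch of Eq.~\eqref{eq:invdef}, just simplified via $|a|=1$ before subtracting. Your remark about the tacit assumptions $b\ne 0$ and $k\,a+b\ne 0$ is a fair point the paper glosses over; note that when $|a|>1$ these are automatic from $\gcd(a,b)=1$, so the caveat only bites in the $|a|=1$ branch, where $(0^{-1})_{\pm 1}$ is indeed undefined by Eq.~\eqref{eq:invdef}.
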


\begin{proof}
\begin{description}
\item[Case 1] $|a|>1$\\
$a(a^{-1})_{b}+b(b^{-1})_{a}=1+a\,b \implies (k\,a+b)(b^{-1})_{a}=1+a(b-(a^{-1})_{b}+k(b^{-1})_{a}) \implies
((k\,a+b)^{-1})_{a}=(b^{-1})_{a}$

\item[Case 2] $|a|=1$\\
$((k\,a+b)^{-1})_{a}-(b^{-1})_{a}\\
=\frac{1}{2}|a|(\sgn(a)-\sgn(k\,a+b))+\sgn(k\,a+b)-(\frac{1}{2}|a|(\sgn(a)-\sgn(b))+\sgn(b))\\
=\frac{1}{2}(|a|-2)(\sgn(b)-\sgn(k\,a+b))=\frac{1}{2}(\sgn(k\,a+b)-\sgn(b))$
\end{description}
\end{proof}

\begin{corollary}
If $a,b,k \in\mathbb{Z}, |a| > 1$ and $\gcd(a,b)=1$, then
\begin{equation}\label{eq:invform}
\begin{aligned}
(a^{-1})_{k\,a+b}&=k(a-(b^{-1})_{a})+(a^{-1})_{b}\\
(a^{-1})_{k\,a-b}&=k(b^{-1})_{a}-(b-(a^{-1})_{b})
\end{aligned}
\end{equation}
\end{corollary}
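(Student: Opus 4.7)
The plan is to derive both identities directly from the Reciprocity Formula (Theorem~\ref{th:reciprocity}), using the previous Corollary to handle the ``wrong-side'' modular inverse. The key observation is that since $\gcd(a,b)=1$ and $|a|>1$, we have $\gcd(a,ka\pm b)=\gcd(a,b)=1$ and $ka\pm b\neq 0$ (otherwise $a\mid b$, forcing $|a|=1$), so every modular inverse appearing in the argument is well-defined.

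For the first identity, I would apply Theorem~\ref{th:reciprocity} to the coprime pair $(a,ka+b)$ to obtain
\begin{equation*}
a(a^{-1})_{ka+b}+(ka+b)\bigl((ka+b)^{-1}\bigr)_{a}=1+a(ka+b).
\end{equation*}
By the previous Corollary (Case~1, since $|a|>1$), $\bigl((ka+b)^{-1}\bigr)_{a}=(b^{-1})_{a}$. Substituting and expanding the product $(ka+b)(b^{-1})_a$, then invoking reciprocity on the original pair $(a,b)$ to rewrite $b(b^{-1})_{a}=1+ab-a(a^{-1})_{b}$, the constants and the $ab$ terms cancel cleanly, leaving
\begin{equation*}
a(a^{-1})_{ka+b}=a\bigl[k(a-(b^{-1})_{a})+(a^{-1})_{b}\bigr].
\end{equation*}
Because $|a|>1$ ensures $a\neq 0$, cancelling $a$ gives the stated identity. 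Note that this is an equality of integers (not merely a congruence), so no separate range verification is required.

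For the second identity I would proceed in the same fashion with $(a,ka-b)$. The one extra ingredient is to evaluate $\bigl((ka-b)^{-1}\bigr)_{a}$: from $(ka-b)X\equiv -bX\equiv 1\pmod{a}$ one sees $X\equiv -(b^{-1})_{a}\pmod{a}$, and checking the defining range of Definition~1.3 in the two sub-cases $a>1$ and $a<-1$ shows uniformly that $\bigl((ka-b)^{-1}\bigr)_{a}=a-(b^{-1})_{a}$. Substituting into reciprocity, the terms $a(ka-b)$ on the two sides cancel exactly, and a second application of reciprocity on $(a,b)$ removes $b(b^{-1})_a$, producing
\begin{equation*}
a(a^{-1})_{ka-b}=a\bigl[k(b^{-1})_{a}-(b-(a^{-1})_{b})\bigr],
\end{equation*}
from which dividing by $a$ yields the result.

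The main obstacle is not algebraic but bookkeeping: verifying $\bigl((ka-b)^{-1}\bigr)_{a}=a-(b^{-1})_{a}$ in both sign regimes of $a$, and confirming that the reductions really do telescope to give an equality of integers so that dividing by $a$ is legitimate. Once that small range check is in hand, the rest is a two-line substitution in each case.
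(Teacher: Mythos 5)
Your proposal is correct and follows essentially the same route as the paper: apply the reciprocity formula to the pair $(a, ka\pm b)$, use the preceding corollary (Case $|a|>1$) to replace $\bigl((ka\pm b)^{-1}\bigr)_a$ by $(b^{-1})_a$ or $a-(b^{-1})_a$, then invoke reciprocity on $(a,b)$ and divide by $a$. The only cosmetic difference is that the paper obtains $\bigl((ka-b)^{-1}\bigr)_a=a-(b^{-1})_a$ by writing it as $\bigl((-b)^{-1}\bigr)_a$ and reusing the corollary, whereas you verify the same identity directly by a congruence-plus-range check.
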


\begin{proof}
Let $c=k\,a+b$, and note $a\,(a^{-1})_{c}+c\,(c^{-1})_{a}=1+a \, c.$
\begin{align}
(a^{-1})_{k\,a+b}&=\dfrac{1+a(k\,a+b)-(k\,a+b)((k\,a+b)^{-1})_{a}}{a}\notag\\
&=(k\,a+b)+\dfrac{1-(k\,a+b)(b^{-1})_{a}}{a}\notag\\
&=(k\,a+b)-k(b^{-1})_{a}+\dfrac{1-b(b^{-1})_{a}}{a}\notag\\
&=k(a-(b^{-1})_{a})+(a^{-1})_{b}\notag
\end{align}

This completes the first equation.  And note $((k\,a-b)^{-1})_a=((-b)^{-1})_a=a-(b^{-1})_a$, and the second equation will be obtained.
\end{proof}

It is interesting to note that Eq.\eqref{eq:invform} fails if the classic definition of $(a^{-1})_b=0 \text{ for }|b|=1$ is used.

\section{Extended Modular Inverse}

\subsection{Extended Modular Inverse Definition}
We are also interested to find $x$ satisfying
\begin{equation}\label{eq:extcongruence}
ax\equiv b \pmod{m}
\end{equation}
and the solution is well-known.
For example, in \cite[p.125]{song}, the solution is given as
\begin{equation}\label{eq:extmodinv}
x\equiv b \cdot \dfrac{1}{a} \pmod{m}
\end{equation}
provided that $\gcd(a,m)=1$.  The solution provided by Eq.\eqref{eq:extmodinv} contains all the integers congruent to $x\pmod{m}$.

A new function, namely, extended modular inverse is introduced as follows.
\begin{definition}
Let $a,b,m \in\mathbb{Z}$ and $\gcd(a,m)=d$,extended modular inverse, denoted as $(b(a^{-1})_m)_m$, is defined as
\begin{equation}\label{eq:extmoddef}
(b(a^{-1})_m)_m =\begin{cases} (x)_m & d=1 \,\&\,  a\,x\equiv b\pmod{m}\\
(x)_{\frac{m}{d}} & d\ne1 \,\& \, d|b \,\&\, \left(\dfrac{a}{d}\right)x\equiv\left(\dfrac{b}{d}\right) \pmod{\left(\dfrac{m}{d}\right)}\\
\text{Undefined} & d \nmid b
 \end{cases}
\end{equation}
\end{definition}
Extended modular inverse is having the following properties :
\begin{enumerate}[i.]
\item
If $\gcd(a,m)=1$, then $x=(b(a^{-1})_m)_m$ satisfies the linear congruence equation, Eq.\eqref{eq:extcongruence}.\\
This is obvious from the definition above.

\item 
If $\gcd(a,m)=d\ne1, d|b$, then
\begin{equation}\label{eq:coprimesol}
x_i=\left(\left(\dfrac{b}{d}\right)\left(\dfrac{a}{d}\right)^{-1}\right)_{\frac{m}{d}}+i\dfrac{m}{d}, \qquad i=0,1, \ldots ,d-1
\end{equation}
satisfies the linear congruence equation, Eq.\eqref{eq:extcongruence}.

\begin{flalign}
a\,x_i&=a\left(\left(\left(\dfrac{b}{d}\right)\left(\dfrac{a}{d}\right)^{-1}\right)_{\frac{m}{d}}+i\dfrac{m}{d}\right)=d\left(\dfrac{a}{d}\right)\left(\left(\dfrac{b}{d}\right)\left(\dfrac{a}{d}\right)^{-1}\right)_{\frac{m}{d}}+m\dfrac{i\,a}{d}\notag\\
&=d\left(\dfrac{b}{d}+k\dfrac{m}{d}\right)+m\dfrac{i\,a}{d}=b+m\left(k+i\dfrac{a}{d}\right) , \text{where } k \in \mathbb{Z}. \notag
\end{flalign}
Hence, $a\,x_i \equiv b \pmod{m}$.

\item If, $d\mid b$, then by Eq.\eqref{eq:moddef},
\begin{equation}
\begin{cases}
0 \le (b(a^{-1})_m)_m < m & 0<m\\
m<(b(a^{-1})_m)_m \le 0 & m<0
\end{cases}\notag
\end{equation}

\item If $\gcd(a,m)=1, \gcd(a,b)=g$, then
\begin{equation}\label{eq:coprimemodinv}
(b(a^{-1})_m)_m=\left(\dfrac{b}{g}\left(\dfrac{a}{g}\right)^{-1}_m\right)_m
\end{equation}
Let $x=\left(\dfrac{b}{g}\left(\dfrac{a}{g}\right)^{-1}_m\right)_m$.\\
$a\,x=g\left(\dfrac{a}{g}\right)\left(\dfrac{b}{g}\left(\dfrac{a}{g}\right)^{-1}_m\right)_m=g\left(\dfrac{b}{g}+k\,m\right)=b+k\,g\,m, \text{where } k \in \mathbb{Z}.\\$
Hence, $x$ satisfies Eq.\eqref{eq:extcongruence} and $(b(a^{-1})_m)_m=x$.
\end{enumerate}

\subsection{Reciprocity Formula for Extended Modular Inverse}

Extended Euclidean algorithm, rooted back to over 2000 years ago, is still the most basic algorithm for calculating the ordinary modular inverse, i.e., $(a^{-1})_m$.

On the other hand, in ancient China, Dayan Qiuyi Shu (Dayan Algorithm to Find One) was used to calculate the modular inverse, as noted by Ding et al. on \cite[p.20]{ding+pei+salomaa}.
Both the Euclidean algorithm and Dayan Algorithm use the trick of continuing divisions to obtain pairs of smaller numbers until the solution is obtained.
However, the Euclidean algorithm stops at zero while the Dayan algorithm stops when it reaches one, i.e., $r_n=1$, since it is assumed that the two numbers are co-prime to each other.
And because of this characteristic, the Chinese name of this algorithm states that its objective is to "find one".

A straightforward means to calculate extended modular inverse is to first calculate the ordinary modular inverse, and then multiply by $b$, followed by a modulo operation to obtain the remainder,
as Eq. \eqref{eq:extmodinv} shows. This is the most obvious way by means of the definition of the extended modular inverse, Eq.\eqref{eq:extmoddef}.

However, the following section is to introduce a different algorithm which is also based on repeated division as Euclidean algorithm and Dayan algorithm do.

For the new algorithm, the following Reciprocity formula is needed.

\begin{theorem}\label{th:reciprocity2}\emph{(Reciprocity Formula for Extended Modular Inverse)}\\
Let $p,q,a \in\mathbb{Z}$, $0<q<p,0 \le a <p, \gcd(p,q)=1$, and let $p=c\,q+s\,r, a=\beta\,q-s\,\gamma$, 
where $0 \le r<q, 0\le\gamma<q,s\in\{-1,1\}$, then
\begin{equation}\label{eq:extreciprocity}
(a(q^{-1})_p)_p=\begin{cases}
\dfrac{a}{q}+\dfrac{p}{q}(\gamma(r^{-1})_q)_q=\dfrac{a}{q}+\dfrac{p}{q}(-(s\,a)_q((s\,p)^{-1})_q)_q & 1<q\\
a & q=1
\end{cases}
\end{equation}
\end{theorem}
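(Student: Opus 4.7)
The plan is to produce an explicit integer $x \in [0, p)$ satisfying $qx \equiv a \pmod p$, since by the uniqueness built into the definition \eqref{eq:extmoddef} this forces $x = (a(q^{-1})_p)_p$. Clearing denominators in the proposed identity suggests taking $x := (a + p\,y)/q$ with $y := (\gamma(r^{-1})_q)_q$, so the first step in the main case $1 < q$ is to check that $y$ is well-defined. From $p = cq + sr$ and $s^2 = 1$ one gets $sp \equiv r \pmod q$, hence $\gcd(r, q) = \gcd(p, q) = 1$; and $r \ne 0$, because $r = 0$ would give $q \mid p$, contradicting $\gcd(p, q) = 1 < q$. Therefore $(r^{-1})_q$ exists, $0 \le y < q$, and $ry \equiv \gamma \pmod q$.

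Next I would verify that the candidate $x = (a + py)/q$ is an integer in $[0, p)$ with $qx \equiv a \pmod p$. Reducing $a + py$ modulo $q$ gives $a + py \equiv a + sry \equiv a + s\gamma \pmod q$, and the relation $a = \beta q - s\gamma$ makes this $\equiv 0$, so $x \in \mathbb{Z}$. For the range, $a \ge 0$ and $py \ge 0$ yield $x \ge 0$, while $a \le p-1$ and $y \le q-1$ yield $a + py \le (p-1) + p(q-1) = pq - 1 < pq$, so $x < p$. Finally $qx = a + py \equiv a \pmod p$ is automatic, so uniqueness gives $x = (a(q^{-1})_p)_p$, which is the first form of \eqref{eq:extreciprocity}.

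For the alternative inner expression, $sp = scq + r$ with $0 \le r < q$ gives $r = (sp)_q$, while $s\gamma = \beta q - a$ gives $\gamma \equiv -sa \pmod q$, hence $\gamma = (-sa)_q$. Reducing the products $\gamma(r^{-1})_q$ and $-(sa)_q((sp)^{-1})_q$ modulo $q$ shows both are congruent to $-a\,p^{-1}$ (in $\mathbb{Z}/q\mathbb{Z}$), so their $(\cdot)_q$-reductions coincide. The case $q = 1$ is immediate since $(1^{-1})_p = 1$ and $0 \le a < p$ force $(a)_p = a$. The main obstacle I foresee is just careful bookkeeping of the sign $s \in \{\pm 1\}$ in the congruence $a + py \equiv 0 \pmod q$ and the safety check that $r \ne 0$ before invoking $(r^{-1})_q$; once the right candidate $x$ is written down, the verification is two short congruences and two elementary inequalities.
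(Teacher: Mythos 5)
Your proof is correct, and it takes a genuinely different route from the paper's. The paper proves the $1<q$ case by brute-force expansion: it substitutes $(q^{-1})_{cq+sr}$ via the corollary formulas $(a^{-1})_{ka+b}=k(a-(b^{-1})_a)+(a^{-1})_b$ and the ordinary reciprocity identity $q(q^{-1})_r+r(r^{-1})_q=1+qr$, simplifies a long chain of expressions inside the outer $(\cdot)_p$, and only then checks the bound $0\le \frac{a}{q}+\frac{p}{q}(\gamma(r^{-1})_q)_q<p$ to drop the reduction; it does this twice, once for $s=1$ and once for $s=-1$. You instead reverse-engineer the candidate $x=(a+py)/q$ with $y=(\gamma(r^{-1})_q)_q$ and verify the three things that characterize the extended modular inverse: integrality (via $a+py\equiv -s\gamma+s\gamma\equiv 0\pmod q$), the range $0\le x<p$, and the defining congruence $qx\equiv a\pmod p$, after which uniqueness modulo $p$ finishes the job. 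This handles both signs $s$ uniformly and dispenses with the lengthy algebra; your preliminary checks ($r\ne 0$ and $\gcd(r,q)=1$ so that $(r^{-1})_q$ exists, and the identifications $r=(sp)_q$, $\gamma=(-sa)_q$ for the second form) are exactly the right safety checks and are carried out correctly. What the paper's derivation buys in exchange for its length is that it exhibits the formula as a consequence of the ordinary reciprocity formula (Theorem~\ref{th:reciprocity}) and Eq.~\eqref{eq:invform}, which is the thread the author follows to motivate the iterative algorithm of Theorem~\ref{th:theorem0}; your argument is a cleaner verification but presumes the formula is already in hand.
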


\begin{proof}
\begin{enumerate}[1.]
\item $q=1$\\
Since $(1^{-1})_p=1$ and $0\le a<p$, therefore $(a(1^{-1})_p)_p=(a)_p=a$.
\item $s=1$

Note $a=\beta\,q-\gamma$, where $\beta, \gamma \in\mathbb{Z}$, and $0 \le \gamma < q$.
With Eq.\eqref{eq:reciprocity} and Eq.\eqref{eq:invform}, 
\begin{enumerate}[i)]
\item
$1<q \text{ and }(q^{-1})_{c\,q+r}=c(q-(r^{-1})_q)+(q^{-1})_r \text{ and } q(q^{-1})_r+r(r^{-1})_q=1+q\,r\\
\implies (a(q^{-1})_p)_p=((\beta\,q-\gamma)(c(q-(r^{-1})_q)+(q^{-1})_r))_p\\
=(\beta(c\,q^2-c\,q(r^{-1})_q+q(q^{-1})_r)-\gamma(c\,q-c(r^{-1})_q+(q^{-1})_r)))_p\\
=(\beta(1+(c\,q+r)(q-(r^{-1})_q))-\gamma(c\,q-c(r^{-1})_q+(q^{-1})_r)))_p\\
=\left(\beta-\gamma\left(c\,q-c(r^{-1})_q+\dfrac{1+q\,r-r(r^{-1})_q}{q}\right)\right)_p\\
=\left(\beta-\gamma\left(\dfrac{1+(c\,q+r)(q-(r^{-1})_q)}{q}\right)\right)_p
=\left(\beta-\dfrac{\gamma}{q}+\dfrac{p}{q}\gamma(r^{-1})_q\right)_{p}\\
=\left(\beta-\dfrac{\gamma}{q}+\dfrac{p}{q}\left(q \bigg \lfloor \dfrac{\gamma(r^{-1})_q}{q} \bigg \rfloor+(\gamma(r^{-1})_q)_q\right)\right)_{p}\\
=\left(\beta-\dfrac{\gamma}{q}+\dfrac{p}{q}(\gamma(r^{-1})_q)_q\right)_p=\left(\dfrac{a}{q}+\dfrac{p}{q}(\gamma(r^{-1})_q)_q\right)_p$
\item
$0 \le a\le p-1 \implies 0\le \dfrac{a}{q}+\dfrac{p}{q}(\gamma(r^{-1})_q)_q \le \dfrac{p-1}{q}+\dfrac{p}{q}(q-1)=\dfrac{p\,q-1}{q}<p$
\item
From i) and ii), we can get, for $1<q$,
\begin{flalign}
\left(\dfrac{a}{q}+\dfrac{p}{q}(\gamma(r^{-1})_q)_q\right)_p&=\dfrac{a}{q}+\dfrac{p}{q}(\gamma(r^{-1})_q)_q\notag\\
&=\dfrac{a}{q}+\dfrac{p}{q}((\beta\,q-a)_q)((c\,q+p)^{-1})_q)_q\notag\\
&=\dfrac{a}{q}+\dfrac{p}{q}(-(a)_q(p^{-1})_q)_q\notag
\end{flalign}
This completes the proof for the $s=1$.
\end{enumerate}

\item $s=-1$\\
Note that $a=\beta\,q+\gamma$ and, from Eq.\eqref{eq:invform}, $(q^{-1})_{c\,q-r}=c\,(r^{-1})_q-(r-(q^{-1})_r)$.
\begin{enumerate}[i.)]
\item
\begin{flalign}
(a(q^{-1})_p)_p&=((\beta\,q+\gamma)(q^{-1})_{c\,q-r})_{c\,q-r}\notag\\
&=((\beta\,q+\gamma)(c\,(r^{-1})_q-(r-(q^{-1})_r)))_{c\,q-r}\notag\\
&=(c(\beta\,q+\gamma)(r^{-1})_q-r(\beta\,q+\gamma)+(q^{-1})_r))(\beta\,q+\gamma)_{c\,q-r}\notag\\
&=\left(\beta+\dfrac{\gamma}{q}\left(q(q^{-1})_r-q\,r\right)+c\,\gamma(r^{-1})_q+\beta(c\,q-r)(c^{-1})_q\right)_{c\,q-r}\notag\\
&=\left(\beta+\dfrac{\gamma}{q}\left(q(q^{-1})_r-q\,r\right)+c\,\gamma(r^{-1})_q\right)_{c\,q-r}\notag\\
&=\left(\beta+\dfrac{\gamma}{q}\left(1-r(r^{-1})_q\right)+c\,\gamma(r^{-1})_q\right)_{c\,q-r}=\left(\dfrac{a}{q}+\dfrac{p}{q}\gamma(r^{-1})_q\right)_p\notag\\
&=\left(\dfrac{a}{q}+\dfrac{p}{q}\left( q\bigg\lfloor \dfrac{\gamma(r^{-1})_q}{q} \bigg\rfloor+\left(\gamma(r^{-1})_q\right)_q\right) \right)_p\notag\\
&=\left(\dfrac{a}{q}+\dfrac{p}{q}\left(\gamma(r^{-1})_q\right)_q \right)_p=\dfrac{a}{q}+\dfrac{p}{q}\left(\gamma(r^{-1})_q\right)_q \notag
\end{flalign}
\item
\begin{flalign}
\dfrac{a}{q}+\dfrac{p}{q}\left(\gamma(r^{-1})_q\right)_q&=\dfrac{a}{q}+\dfrac{p}{q}\left((a-\beta q)((c\,q-p)^{-1})_q\right)_q\notag\\
&=\dfrac{a}{q}+\dfrac{p}{q}(-(-a)_q((-p)^{-1})_q)_q\notag
\end{flalign}
\end{enumerate}
\end{enumerate}
\end{proof}
Obviously, Eq. \eqref{eq:extreciprocity} reduces to, Eq. \eqref{eq:reciprocity}, the Reciprocity formula for ordindary modular inverse for positive integers $p, q$, when $a=1$, since $(-(p^{-1})_q)_q=q-(p^{-1})_q$.

\begin{corollary}
Let $p,q,a \in\mathbb{Z}$, $0<q<p,0 \le a <p, \gcd(p,q)=d\ne1$, and let $p=c\,q+s\,r, a=\beta\,q-s\,\gamma$, 
where $0 \le r<q, 0\le\gamma<q,s\in\{-1,1\}$, then
\begin{equation*}
(a(q^{-1})_p)_p=\begin{cases} \dfrac{a}{q}+\dfrac{p}{q}(\gamma(r^{-1})_q)_q=\dfrac{a}{q}+\dfrac{p}{q}(-(s\,a)_q((s\,p)^{-1})_q)_q & d|a \\
\text{Undefined} & d\nmid a\end{cases}
\end{equation*}
\end{corollary}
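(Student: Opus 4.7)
The plan is to reduce the non-coprime case to Theorem~\ref{th:reciprocity2} (the coprime case) by invoking the second branch of the definition of the extended modular inverse, Eq.~\eqref{eq:extmoddef}, which already performs a division by~$d$. The $d \nmid a$ branch of the corollary is then immediate, since Eq.~\eqref{eq:extmoddef} declares $(a(q^{-1})_p)_p$ undefined in exactly that situation.

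For the main branch $d \mid a$, I would first apply Eq.~\eqref{eq:extmoddef} to rewrite
$$(a(q^{-1})_p)_p \;=\; \bigl((a/d)\,((q/d)^{-1})_{p/d}\bigr)_{p/d},$$
and then apply Theorem~\ref{th:reciprocity2} to the coprime triple $(p',q',a') := (p/d,\, q/d,\, a/d)$. Its hypotheses transfer cleanly: $\gcd(p',q') = 1$, $0 < q' < p'$, and $0 \le a' < p'$. To produce the decompositions required by Theorem~\ref{th:reciprocity2}, note that $d \mid p$ and $d \mid q$ force $d \mid r$ in $p = cq + sr$, while $d \mid a$ together with $d \mid q$ force $d \mid \gamma$ in $a = \beta q - s\gamma$; setting $r' := r/d$ and $\gamma' := \gamma/d$ gives $p' = cq' + sr'$ and $a' = \beta q' - s\gamma'$ with $0 \le r',\gamma' < q'$, exactly as required.

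Theorem~\ref{th:reciprocity2} then yields
$$(a'(q'^{-1})_{p'})_{p'} \;=\; \dfrac{a'}{q'} + \dfrac{p'}{q'}\bigl(\gamma'(r'^{-1})_{q'}\bigr)_{q'} \;=\; \dfrac{a}{q} + \dfrac{p}{q}\bigl((\gamma/d)\,((r/d)^{-1})_{q/d}\bigr)_{q/d},$$
so the first claimed expression drops out once I identify $\bigl((\gamma/d)((r/d)^{-1})_{q/d}\bigr)_{q/d}$ with $(\gamma(r^{-1})_q)_q$. But this is just another application of Eq.~\eqref{eq:extmoddef}, this time to the pair with $\gcd(r,q) = d$ and $d \mid \gamma$. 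The second, equivalent form $\frac{a}{q} + \frac{p}{q}\bigl(-(sa)_q((sp)^{-1})_q\bigr)_q$ then follows from the substitutions $r = p - cq$ and $\gamma = \beta q - a$ used at the end of the $s = \pm 1$ branches of the proof of Theorem~\ref{th:reciprocity2}; these manipulations are purely algebraic and survive the presence of the common factor $d$, provided one re-interprets $((sp)^{-1})_q$ as an extended modular inverse, which is legitimate because $d \mid (sa)_q$.

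The main obstacle I foresee is the degenerate subcase $r = 0$, which forces $d = q$ and hence $q' = 1$. There one must fall back on the special $q = 1$ clause of Theorem~\ref{th:reciprocity2}; the hypothesis $d \mid a$ then forces $\gamma = 0$, and the formal product $(\gamma (r^{-1})_q)_q$ must be read through Eq.~\eqref{eq:extmoddef} rather than literally, so that both sides collapse consistently to $a/q$.
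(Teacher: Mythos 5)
Your proposal is correct and follows essentially the same route as the paper: both reduce to the coprime case by passing to $p'=p/d$, $q'=q/d$, $a'=a/d$ via the second branch of Eq.~\eqref{eq:extmoddef}, apply Theorem~\ref{th:reciprocity2} to the primed triple, and then reidentify $\bigl((\gamma/d)((r/d)^{-1})_{q/d}\bigr)_{q/d}$ with $(\gamma(r^{-1})_q)_q$ using the same definition. Your explicit checks that $d\mid r$ and $d\mid\gamma$, and your remark on the degenerate subcase $r=0$, are sound additions that the paper leaves implicit.
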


\begin{proof}
Assume $d|a$, let $p'=\dfrac{p}{d}, q'=\dfrac{q}{d},a'=\dfrac{a}{d}$, then
\begin{enumerate}[1.]
\item
$p=c\,q+s\,r, a=\beta\,q-s\,\gamma \implies p'=c\,q'+s\,r', a'=\beta\,q'-s\,\gamma'$, and 
\item Pick up the smallest $x_i$ from Eq.\eqref{eq:coprimesol} and note Eq.\eqref{eq:extreciprocity},
\begin{flalign}
(a(q^{-1})_p)_p&=\left(\left(\dfrac{a}{d}\right)\left(\dfrac{b}{d}\right)^{-1}_{\frac{p}{d}}\right)_{\frac{p}{d}}=(a'(q'^{-1})_{p'})_{p'}
=\dfrac{a'}{q'}+\dfrac{p'}{q'}(\gamma'(r'^{-1})_{q'})_{q'}\notag\\
&=\dfrac{a'}{q'}+\dfrac{p'}{q'}((d\,\gamma')((d\,r')^{-1})_{d\,q'})_{d\,q'}=\dfrac{a}{q}+\dfrac{p}{q}(\gamma(r^{-1})_q)_q\notag
\end{flalign}
\end{enumerate}
\end{proof}

\begin{theorem}\label{th:theorem0}\emph{(Extended Modular Inverse Formula)}
\footnote{As Theorem \ref{th:theorem0} reduces to the Dayan algorithm for specific values, it may be called a generalized Dayan algorithm.  In fact, this algorithm was first inspired by my study on the Dayan algorithm\cite{man}.}
\\
Let $p,q,a,k_i,r_i, c_i, \beta_i, \gamma_i \in\mathbb{N_+}, 1<q<p, 0\le a<p, \gcd(p,q)=1$.\\
If $r_{i-1}=c_{i+1}\,r_i+s_{i+1}\,r_{i+1}, \gamma_i=\beta_i\,r_i-s_{i+1}\,\gamma_{i+1}, f_i=c_if_{i-1}+s_{i-1}f_{i-2}$,
where $r_{-1}=p,r_0=q,\gamma_0=(a)_p, f_0=1,f_1=c_1, 0\le r_{i+1}<r_i, 0\le\gamma_{i+1}<r_i$,
then
\begin{equation}\label{eq:extmodinvform1}
(a(q^{-1})_p)_p=\displaystyle\sum_{i=0}^{n} \dfrac{p\,\gamma_i}{r_{i-1}r_i},
\footnote{Eq.\ref{eq:extmodinvform1} can be viewed as a generalized form of the following formula deduced by Euler, $z=q+a\,b\,v\left(\dfrac{1}{ab}-\dfrac{1}{bc}+\dfrac{1}{cd}-\dfrac{1}{de}+ \ldots\right)$ \cite[p.61]{dickson}.}
\end{equation}
, and
\begin{equation}\label{eq:extmodinvform2}
(a(q^{-1})_p)_p=\displaystyle\sum_{i=0}^{n} f_i\beta_i
\end{equation}
where $n$ is such that $r_n=1$, or $\gamma_{n+1}=0$.\\
Note that this algorithm can be further generalized if the following formulas are used instead : $r_{i-1}=c_{i+1}\,r_i+s_{i+1,1}\,r_{i+1}, \gamma_i=\beta_i\,r_i+s_{i+1,2}\,\gamma_{i+1}$.
\end{theorem}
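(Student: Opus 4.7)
I will prove the two formulas in sequence. Formula~\eqref{eq:extmodinvform1} comes from iterating Theorem~\ref{th:reciprocity2}; formula~\eqref{eq:extmodinvform2} is then deduced by reorganizing the first sum using an auxiliary Bezout-type identity that links the $r_i$-recursion with the $f_i$-recursion.

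For \eqref{eq:extmodinvform1}, I would establish by induction on $k \in \{0,1,\dots,n\}$ the intermediate identity
\begin{equation*}
(a(q^{-1})_p)_p \;=\; \sum_{i=0}^{k-1} \frac{p\,\gamma_i}{r_{i-1}\,r_i} \;+\; \frac{p}{r_{k-1}}\,\bigl(\gamma_k(r_k^{-1})_{r_{k-1}}\bigr)_{r_{k-1}}.
\end{equation*}
The base case $k=0$ is vacuous. The inductive step applies Theorem~\ref{th:reciprocity2} with $p\mapsto r_{k-1}$, $q\mapsto r_k$, $a\mapsto \gamma_k$, using the decompositions $r_{k-1}=c_{k+1}r_k+s_{k+1}r_{k+1}$ and $\gamma_k=\beta_k r_k-s_{k+1}\gamma_{k+1}$; this rewrites $\bigl(\gamma_k(r_k^{-1})_{r_{k-1}}\bigr)_{r_{k-1}}$ as $\frac{\gamma_k}{r_k}+\frac{r_{k-1}}{r_k}\bigl(\gamma_{k+1}(r_{k+1}^{-1})_{r_k}\bigr)_{r_k}$, producing the next term $\frac{p\gamma_k}{r_{k-1}r_k}$ of the sum plus the new leftover. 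Termination at $k=n$ uses the $q=1$ clause of Theorem~\ref{th:reciprocity2}: when $r_n=1$ we have $\gamma_{n+1}=0$ forced by $0\le \gamma_{n+1}<r_n$, and the leftover $\frac{p}{r_{n-1}}\gamma_n$ merges into the sum as its final term $\frac{p\gamma_n}{r_{n-1}r_n}$; when instead $\gamma_{n+1}=0$ earlier, one more reciprocity step yields the same conclusion.

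For \eqref{eq:extmodinvform2}, I would first establish the Bezout-type identity
\begin{equation*}
p \;=\; f_k\,r_{k-1} \;+\; s_k\,f_{k-1}\,r_k \qquad (k\ge 1)
\end{equation*}
by induction on $k$: the base $k=1$ is $p=c_1 r_0+s_1 r_1$ (using $f_0=1$, $f_1=c_1$), and the step substitutes $r_{k-2}=c_k r_{k-1}+s_k r_k$ into the previous identity and regroups via $f_k=c_k f_{k-1}+s_{k-1}f_{k-2}$. I would then prove by induction on $k$ the running identity
\begin{equation*}
\sum_{i=0}^{k}\frac{p\,\gamma_i}{r_{i-1}\,r_i} \;=\; \sum_{i=0}^{k} f_i\,\beta_i \;-\; \frac{f_k\,s_{k+1}\,\gamma_{k+1}}{r_k},
\end{equation*}
whose base $k=0$ is the decomposition $\gamma_0=\beta_0 r_0-s_1\gamma_1$. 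In the step, substitute $\gamma_k=\beta_k r_k-s_{k+1}\gamma_{k+1}$ into $\frac{p\gamma_k}{r_{k-1}r_k}$ and collect: the coefficient of $\beta_k$ becomes $\frac{p}{r_{k-1}}-f_k$, which by the Bezout identity equals $\frac{f_{k-1}s_k r_k}{r_{k-1}}$; the coefficient of $\gamma_{k+1}$ likewise collapses via the same identity to produce the required new remainder $-\frac{f_k s_{k+1}\gamma_{k+1}}{r_k}$. At termination $\gamma_{n+1}=0$, the remainder vanishes, and \eqref{eq:extmodinvform2} follows from \eqref{eq:extmodinvform1}.

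The main obstacle I anticipate is the sign bookkeeping: because each $s_i\in\{-1,+1\}$ may be chosen independently at every division step, every substitution must carry the signs faithfully, and the Bezout identity above is precisely what makes the sign choices cancel consistently between the $r$-recursion on one side and the $f$-recursion on the other. Once that identity is in place, both inductions reduce to straightforward telescoping algebra organized around Theorem~\ref{th:reciprocity2}.
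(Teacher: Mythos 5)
Your proposal is correct and follows essentially the same route as the paper: both obtain Eq.~\eqref{eq:extmodinvform1} by inductively iterating Theorem~\ref{th:reciprocity2} to produce a telescoping sum with a modular-inverse leftover that vanishes when $r_n=1$ or $\gamma_{n+1}=0$, and both derive Eq.~\eqref{eq:extmodinvform2} from the auxiliary identity $p=f_k\,r_{k-1}+s_k\,f_{k-1}\,r_k$ proved by induction. The only differences are cosmetic (you relate the two partial sums directly instead of carrying the inverse leftover through the second induction, and your index bookkeeping in the base case is in fact cleaner than the paper's).
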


\begin{proof} Note $q>1, (a(q^{-1})_p)_p=(\gamma_0(r_0^{-1})_{r_{-1}})_{r_{-1}}$, and Eq. \eqref{eq:extreciprocity}.
\begin{enumerate}[1.)]
\item
Assume $r_i\ne1$ and $\gamma_{i+1}\ne0$ for all $i=0, 1, 2, \ldots, n-1$, then we can prove, by induction that
\begin{equation}\label{eq:extmodinvform3}
(\gamma_0(r_0^{-1})_{r_{-1}})_{r_{-1}}=\displaystyle\sum_{i=0}^{n} \dfrac{r_{-1}\gamma_i}{r_{i-1}r_i}+\dfrac{r_{-1}}{r_n}(\gamma_{n+1}(r_{n+1}^{-1})_{r_{n}})_{r_{n}}
\end{equation}
\begin{enumerate}[a.)]
\item
Since $(\gamma_0(r_0^{-1})_{r_{-1}})_{r_{-1}}=\dfrac{\gamma_0}{r_0}+\dfrac{r_{-1}}{r_0}(\gamma_1(r_1^{-1})_{r_{0}})_{r_{0}}
$
therefore, Eq. \eqref{eq:extmodinvform3} is true when $n=0$.
\item
If $r_i\ne1$ and $\gamma_{i+1}\ne0$ for all $i=0, 1, 2, \ldots ,k-1$, then\\
$(\gamma_0(r_0^{-1})_{r_{-1}})_{r_{-1}}=\displaystyle\sum_{i=0}^{k} \dfrac{r_{-1}\gamma_i}{r_{i-1}r_i}+\dfrac{r_{-1}}{r_k}(\gamma_{k+1}(r_{k+1}^{-1})_{r_{k}})_{r_{k}}$.\\
If $r_k\ne1$, and $\gamma_{k+1}\ne0$, then\\
$(\gamma_0(r_0^{-1})_{r_{-1}})_{r_{-1}}=\displaystyle\sum_{i=0}^{k} \dfrac{r_{-1}\gamma_i}{r_{i-1}r_i}+\dfrac{r_{-1}}{r_k}(\gamma_{k+1}(r_{k+1}^{-1})_{r_{k}})_{r_{k}}\\
=\displaystyle\sum_{i=0}^{k} \dfrac{r_{-1}\gamma_i}{r_{i-1}r_i}+\dfrac{r_{-1}}{r_k}\left(\dfrac{\gamma_{k+1}}{r_{k+1}}+\dfrac{r_k}{r_{k+1}}(\gamma_{k+2}(r_{k+2}^{-1})_{r_{k+1}})_{r_{k+1}}\right)\\
=\displaystyle\sum_{i=0}^{k+1} \dfrac{r_{-1}\gamma_i}{r_{i-1}r_i}+\dfrac{r_{-1}}{r_{k+1}}(\gamma_{k+2}(r_{k+2}^{-1})_{r_{k+1}})_{r_{k+1}}$.\\
Hence, Eq. \eqref{eq:extmodinvform3} is also true for $n=k+1$, provided that it is true for $n=k$.
\item
If $\gamma_{n+1}=0$, or $r_n=1$, then $(\gamma_{n+1}(r_{n+1}^{-1})_{r_{n}})_{r_{n}}=0$, and hence, Eq.\eqref{eq:extmodinvform1} is obtained.
\end{enumerate}
\item\
Assume $r_i\ne1$ and $\gamma_{i+1}\ne0$ for all $i=0, 1, 2, \ldots, n-1$, then we can prove, by induction that
\begin{equation}\label{eq:extmodinvform4}
(\gamma_0(r_0^{-1})_{r_{-1}})_{r_{-1}}=\displaystyle\sum_{i=0}^{n}f_i\beta_i-s_{n+1}f_n\dfrac{\gamma_{n+1}}{r_n}+\dfrac{r_{-1}}{r_n}(\gamma_{n+1}(r_{n+1}^{-1})_{r_{n}})_{r_{n}}
\end{equation}
\begin{enumerate}[a.]
\item
First we want to prove
\begin{equation*}
r_{-1}=f_i r_{i-1}+s_i f_{i-1}r_i
\end{equation*}
\begin{enumerate}[i.)]
\item
When $i=1, r_{-1}=f_1 r_0+s_1 f_0 r_1=c_1 r_0+s_1 r_1$.  Hence, it is true.
\item
Assume it is true for $i=k$, then\\
$r_{-1}=f_k r_{k-1}+s_k f_{k-1}r_k=f_k(c_{k+1}r_k+s_{k+1}r_{k+1})+s_k f_{k-1}r_k=(c_{k+1}f_k+s_k f_{k-1})r_k+s_{k+1}f_i r_{k+1}=f_{k+1}r_k+s_{k+1}f_k r_{k+1}$.
Hence, it is true for $i=k+1$.
\end{enumerate}
\item
Since $(\gamma_0(r_0^{-1})_{r_{-1}})_{r_{-1}}=\dfrac{\gamma_0}{r_0}+\dfrac{r_{-1}}{r_0}(\gamma_1(r_1^{-1})_{r_0})_{r_0}\\
=\dfrac{\beta_1r_0-s_1\gamma_1}{r_0}+\dfrac{r_{-1}}{r_0}(\gamma_1(r_1^{-1})_{r_0})_{r_0}
=\beta_1-s_1 f_0\dfrac{\gamma_1}{r_0}+\dfrac{r_{-1}}{r_0}(\gamma_1(r_1^{-1})_{r_0})_{r_0}$,
therefore, Eq. \eqref{eq:extmodinvform4} is true when $n=0$.
\item
If it is true for $n=k-1$, then\\
$(\gamma_0(r_0^{-1})_{r_{-1}})_{r_{-1}}
=\displaystyle\sum_{i=0}^{k-1}f_i\beta_i-s_{k}f_{k-1}\dfrac{\gamma_{k}}{r_{k-1}}+\dfrac{r_{-1}}{r_{k-1}}(\gamma_{k}(r_{k}^{-1})_{r_{k-1}})_{r_{k-1}}\\
=\displaystyle\sum_{i=0}^{k-1}f_i\beta_i-s_{k}f_{k-1}\dfrac{\gamma_{k}}{r_{k-1}}+\dfrac{r_{-1}}{r_{k-1}}\left(\dfrac{\gamma_k}{r_k}+(\gamma_{k+1}(r_{k+1}^{-1})_{r_{k}})_{r_{k}}\right)\\
=\displaystyle\sum_{i=0}^{k-1}f_i\beta_i-s_{k}f_{k-1}\dfrac{\gamma_{k}}{r_{k-1}}+\dfrac{f_k r_{k-1}+s_k f_{k-1}r_k}{r_{k-1}}\dfrac{\gamma_k}{r_k}+\dfrac{r_{-1}}{r_{k-1}}(\gamma_{k+1}(r_{k+1}^{-1})_{r_{k}})_{r_{k}}\\
=\displaystyle\sum_{i=0}^{k-1}f_i\beta_i+f_k\dfrac{\beta_{k+1}r_k-s_{k+1}\gamma_{k+1}}{r_k}+\dfrac{r_{-1}}{r_{k-1}}(\gamma_{k+1}(r_{k+1}^{-1})_{r_{k}})_{r_{k}}\\
=\displaystyle\sum_{i=0}^{k}f_i\beta_{i}-s_{k+1}f_k\dfrac{\gamma_{k+1}}{r_k}+\dfrac{r_{-1}}{r_{k-1}}(\gamma_{k+1}(r_{k+1}^{-1})_{r_{k}})_{r_{k}}$
\\Hence, it is true for $n=k$.
\item
Since $r_n=1 \implies \gamma_{n+1}=0$, $(\gamma_0(r_0^{-1})_{r_{-1}})_{r_{-1}}=\displaystyle\sum_{i=0}^{n}f_i\beta_i$ when $r_n=1$ or $\gamma_{n+1}=0$.  Hence, Eq.\eqref{eq:extmodinvform2} is proved.
\end{enumerate}
\end{enumerate}
\end{proof}

\begin{corollary}
For the algorithm in Theorem \ref{th:theorem0}, given $s_i\in\{-1,1\}$, then $r_i, c_i, \beta_i, \gamma_i$ are generated by the following equations :
\begin{equation}\label{eq:extinvalgo1}
\left.\begin{aligned}
c_{i+1} &=s_{i+1}\Bigl \lfloor s_{i+1}\dfrac{r_{i-1}}{r_i} \Bigl \rfloor,\\
r_{i+1}&=(s_{i+1}\,r_{i-1})_{r_i},\\
\gamma_{i+1}&=(-s_{i+1}\gamma_i)_{r_i},\\
\beta_i&=s_{i+1}\Bigl \lceil s_{i+1}\dfrac{\gamma_i}{r_i} \Bigl \rceil\\
f_i&=c_if_{i-1}+s_{i-1}f_{i-2}
\end{aligned}\right\}\text{ when } r_i>1
\end{equation}
When $r_i$=1, $c_{i+1}=r_{i-1}, r_{i+1}=0,\gamma_{i+1}=0,\beta_i=\gamma_i$.
\end{corollary}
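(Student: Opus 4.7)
The plan is to verify each of the closed-form expressions in the corollary by direct substitution back into the recurrence relations required by Theorem \ref{th:theorem0}, splitting on the sign $s_{i+1} \in \{-1, 1\}$. The final equation $f_i = c_i f_{i-1} + s_{i-1} f_{i-2}$ is already part of the hypothesis of that theorem, so nothing new is required for it, and one only needs to check the four formulas for $c_{i+1}$, $r_{i+1}$, $\beta_i$, and $\gamma_{i+1}$ (together with the $r_i = 1$ termination clause).

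First, I would treat the pair $(c_{i+1}, r_{i+1})$. The defining relation $r_{i-1} = c_{i+1} r_i + s_{i+1} r_{i+1}$ together with the range condition $0 \le r_{i+1} < r_i$ uniquely determines this pair. When $s_{i+1} = 1$ this is ordinary Euclidean division, so $c_{i+1} = \lfloor r_{i-1}/r_i \rfloor$ and $r_{i+1} = (r_{i-1})_{r_i}$, which agree with the stated formulas. When $s_{i+1} = -1$ one needs $c_{i+1} r_i - r_{i+1} = r_{i-1}$, forcing $c_{i+1} = \lceil r_{i-1}/r_i \rceil$ and $r_{i+1} = c_{i+1} r_i - r_{i-1}$. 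Using the elementary identity $-\lfloor -x \rfloor = \lceil x \rceil$, the corollary's expressions $s_{i+1}\lfloor s_{i+1} r_{i-1}/r_i \rfloor$ and $(s_{i+1} r_{i-1})_{r_i}$ simplify to exactly these values.

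The pair $(\beta_i, \gamma_{i+1})$ is handled by an analogous case split on the defining relation $\gamma_i = \beta_i r_i - s_{i+1} \gamma_{i+1}$ with $0 \le \gamma_{i+1} < r_i$. For $s_{i+1} = 1$ the constraint forces $\beta_i = \lceil \gamma_i/r_i \rceil$ and $\gamma_{i+1} = \beta_i r_i - \gamma_i$, while for $s_{i+1} = -1$ it reduces to plain Euclidean division, giving $\beta_i = \lfloor \gamma_i/r_i \rfloor$ and $\gamma_{i+1} = (\gamma_i)_{r_i}$. Both cases are matched by the corollary's formulas via the same floor-to-ceiling conversion as above, together with the identity $(-x)_n = n\lceil x/n \rceil - x$ valid for $x \ge 0$ and $n > 0$, which is a direct consequence of Eq.~\eqref{eq:moddef}.

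The boundary case $r_i = 1$ is immediate: the constraints $0 \le r_{i+1} < 1$ and $0 \le \gamma_{i+1} < 1$ force $r_{i+1} = 0$ and $\gamma_{i+1} = 0$, so the recurrences collapse to $c_{i+1} = r_{i-1}$ and $\beta_i = \gamma_i$, as claimed. The only real obstacle here is bookkeeping the sign-dependent conversion between floor and ceiling and checking that the modular-operation notation $(\cdot)_n$ yields the intended non-negative representative in each of the four subcases; both are routine, so I expect no serious difficulty beyond keeping the four case analyses organized.
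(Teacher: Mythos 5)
Your proposal is correct and takes essentially the same approach as the paper: both arguments verify that the stated closed forms satisfy the defining recurrences $r_{i-1}=c_{i+1}\,r_i+s_{i+1}\,r_{i+1}$ and $\gamma_i=\beta_i\,r_i-s_{i+1}\,\gamma_{i+1}$ together with the range conditions, and handle $r_i=1$ as a trivial boundary case. The only cosmetic difference is that you split on $s_{i+1}=\pm 1$ and identify each case as a floor or ceiling division, whereas the paper performs the substitution uniformly using $(a)_m=a-m\lfloor a/m\rfloor$ and $\lceil x\rceil=-\lfloor -x\rfloor$; your added remark that the range conditions determine the pairs uniquely is a harmless strengthening.
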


\begin{proof}
The algorithm in Theorem \ref{th:theorem0} requires $r_i, c_i, \beta_i, \gamma_i$ to satisfy 
\begin{equation*}
r_{i-1}=c_{i+1}\,r_i+s_{i+1}\,r_{i+1}, \gamma_i=\beta_i\,r_i-s_{i+1}\,\gamma_{i+1}
\end{equation*}
When $r_i=1$, these conditions are obviously satisfied.  When $r_i>1$, these conditions are also satisfied with Eq.\eqref{eq:extinvalgo1}, by the following substitutions.

\begin{enumerate}[1.]
\item $r_i$
\begin{flalign}
c_i\,r_{i-1}+s_i\,r_i&=\left(s_i\Bigl \lfloor s_i\dfrac{r_{i-2}}{r_{i-1}} \Bigl \rfloor\right)r_{i-1}+s_i(s_i\,r_{i-2})_{r_{i-1}}\notag\\
&=s_i\left(\Bigl \lfloor s_i\dfrac{r_{i-2}}{r_{i-1}} \Bigl \rfloor r_{i-1}+(s_i\,r_{i-2})_{r_{i-1}}\right)=s_i(s_i\,r_{i-2})=r_{i-2}\notag
\end{flalign}

\item $\gamma_i$
\begin{flalign}
\beta_i\,r_i-s_{i+1}\,\gamma_{i+1}&=s_{i+1}\Bigl \lceil s_{i+1}\dfrac{\gamma_i}{r_i} \Bigl \rceil\,r_i-s_{i+1}(-s_{i+1}\gamma_i)_{r_i}\notag\\
&=-s_{i+1}\Bigl \lfloor -s_{i+1}\dfrac{\gamma_i}{r_i} \Bigl \rfloor\,r_i-s_{i+1}(-s_{i+1}\gamma_i)_{r_i}\notag\\
&=-s_{i+1}\left(\Bigl \lfloor -s_{i+1}\dfrac{\gamma_i}{r_i} \Bigl \rfloor\,r_i+(-s_{i+1}\gamma_i)_{r_i}\right)\notag\\
&=-s_{i+1}(-s_{i+1}\gamma_i)=\gamma_i\notag
\end{flalign}
\end{enumerate}
\end{proof}

Alternately, Eq. \eqref{eq:extinvalgo1} can be written as :
\begin{equation}\label{eq:extinvalgo2}
\left.\begin{aligned}
c_i &=s_i\Bigl \lfloor s_i\dfrac{r_{i-2}}{r_{i-1}} \Bigl \rfloor,\\
r_i&=(s_i\,r_{i-2})_{r_{i-1}},\\
\gamma_i&=(-s_i\gamma_{i-1})_{r_{i-1}},\\
\beta_i&=s_{i+1}\Bigl \lceil s_{i+1}\dfrac{\gamma_i}{r_i} \Bigl \rceil\\
f_i&=c_if_{i-1}+s_{i-1}f_{i-2}
\end{aligned}\right\}\text{ when } r_i>0
\end{equation}

\begin{corollary}\label{th:corol1}
In Theorem \ref{th:theorem0}, if $\gcd(p,q)=d\ne1$, and with the same algorithm to generate  $r_{i-1}=c_{i+1}\,r_i+s_{i+1}\,r_{i+1}, \gamma_i=\beta_i\,r_i-s_{i+1}\,\gamma_{i+1}, f_i=c_if_{i-1}+s_{i-1}f_{i-2}$,
then 
\begin{enumerate}[1.]
\item The series $r_i$ will terminate at $r_{n+1}=0$, or $\gamma_{n+1}=0$.
\item If $r_{n+1}=0$, then $r_n=d=\gcd(p,q)$, and $d|r_i$ for all $i=-1,0,1, \ldots n$.
\item If $r_{n+1}=0 \,\&\, r_n \nmid \gamma_{n+1}$, then $d\nmid a$, and there is no solution for $(a(q^{-1})_p)_p$.
\item If $r_{n+1}=0$ and $\gamma_{m+1}=0, m\le n$, then
\begin{equation*}
 \left(\left(\dfrac{a}{d}\right)\left(\dfrac{q}{d}\right)^{-1}_{\frac{p}{d}}\right)_{\frac{p}{d}}=\displaystyle\sum_{i=0}^{m} \dfrac{p\,\gamma_i}{r_{i-1}r_i}=\displaystyle\sum_{i=0}^{m} f_i\beta_i 
\end{equation*}
\end{enumerate}
\end{corollary}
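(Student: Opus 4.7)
The plan is to treat the four assertions in turn, leaning on the strict decrease of the sequence $r_i$ and on the fact that the recursion $r_{i-1}=c_{i+1}r_i+s_{i+1}r_{i+1}$ is a signed Euclidean algorithm on the pair $(p,q)$.

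For (1), I would note that the algorithm produces $r_{i+1}=(s_{i+1}r_{i-1})_{r_i}$, so $0\le r_{i+1}<r_i$. Hence $\{r_i\}$ is a strictly decreasing sequence of non-negative integers once it is positive, and must reach $0$ at some step $n+1$. The secondary stopping condition $\gamma_{n+1}=0$ may occur earlier, but in any case the process terminates.

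For (2), I would observe that the relation $r_{i-1}=c_{i+1}r_i+s_{i+1}r_{i+1}$ together with $s_{i+1}\in\{-1,1\}$ implies $\gcd(r_{i-1},r_i)=\gcd(r_i,r_{i+1})$. A straightforward induction from $\gcd(r_{-1},r_0)=\gcd(p,q)=d$ gives $\gcd(r_{i-1},r_i)=d$ for every $i$, so at termination $r_n=\gcd(r_n,r_{n+1})=\gcd(r_n,0)=d$, and in particular $d\mid r_i$ for every $i\le n$.

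For (3), I would use the parallel recursion $\gamma_i=\beta_i r_i-s_{i+1}\gamma_{i+1}$, which gives $\gamma_{i+1}\equiv -s_{i+1}\gamma_i\pmod{r_i}$. Since $d\mid r_i$ by part (2), this congruence descends to $\gamma_{i+1}\equiv -s_{i+1}\gamma_i\pmod d$, and by iteration $\gamma_{n+1}\equiv \pm\gamma_0\pmod d$. Because $d\mid p$ and $\gamma_0=(a)_p$, we have $\gamma_0\equiv a\pmod d$, so $\gamma_{n+1}\equiv \pm a\pmod d$. Therefore $d\mid a$ would force $d\mid\gamma_{n+1}$, i.e.\ $r_n\mid\gamma_{n+1}$; the contrapositive yields (3).

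For (4), assume $d\mid a$, and set $p'=p/d$, $q'=q/d$, $a'=a/d$, so $\gcd(p',q')=1$. I would then show that the algorithm of Theorem \ref{th:theorem0} applied to $(p',q',a')$ produces the scaled sequences $r'_i=r_i/d$ and $\gamma'_i=\gamma_i/d$, while $c'_i=c_i$, $\beta'_i=\beta_i$, $s'_i=s_i$, and hence $f'_i=f_i$. This is immediate from the formulas
\[
c_{i+1}=s_{i+1}\Bigl\lfloor s_{i+1}\dfrac{r_{i-1}}{r_i}\Bigr\rfloor,\qquad \beta_i=s_{i+1}\Bigl\lceil s_{i+1}\dfrac{\gamma_i}{r_i}\Bigr\rceil,
\]
since dividing numerator and denominator by $d$ leaves the floor/ceiling unchanged. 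By part (2), $r'_n=1$, so the primed algorithm halts by step $n$, and if in addition $\gamma_{m+1}=0$ with $m\le n$ then $\gamma'_{m+1}=0$ and the primed algorithm halts already at step $m$. Applying Eq.~\eqref{eq:extmodinvform1} and Eq.~\eqref{eq:extmodinvform2} of Theorem \ref{th:theorem0} to $(p',q',a')$ then yields the claimed formula.

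The one step I expect to need the most care is the scaling argument in (4): one must check that both termination conditions transfer correctly under division by $d$, and that the identity $p\gamma_i/(r_{i-1}r_i)=p'\gamma'_i/(r'_{i-1}r'_i)$ holds term-by-term so that the sum expressions match in both forms.
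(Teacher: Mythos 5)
Your proposal is correct and follows essentially the same route as the paper: parts (1)--(3) match the paper's argument almost verbatim (strict decrease of $r_i$, the invariance $\gcd(r_{i-1},r_i)=\gcd(r_i,r_{i+1})$, and the congruence $\gamma_{n+1}\equiv\pm\gamma_0\pmod d$). For part (4) the paper is quite terse, merely invoking its earlier corollary on the reciprocity formula for non-co-prime $p,q$ and asserting that Theorem \ref{th:theorem0} then terminates at $i=m$; your explicit verification that the whole algorithm scales by $d$ (i.e.\ $r_i'=r_i/d$, $\gamma_i'=\gamma_i/d$, $c_i'=c_i$, $\beta_i'=\beta_i$, $f_i'=f_i$, with term-by-term equality of the sums) is the same idea carried out in more detail, and is if anything a more complete write-up of the step the paper leaves implicit.
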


\begin{proof}
\begin{enumerate}[1.]
\item Since $r_i$ is a decreasing series of positive integers, it will eventually terminate at 0. If $\gamma_m$ reaches 0 before $r_n$, where $m<n$, then $\gamma_i=0$ for $i=m,m+1, \ldots, n$
\item Let $r_{n+1}=0$, then
\begin{flalign}
\gcd(r_{n-1},r_n)&=\gcd(c_{n+1}\,r_n+s_{n+1}\,r_{n+1},r_n)=\gcd(c_{i+1}\,r_n,r_n)\notag\\
&=r_n=d\notag.
\end{flalign}
For $i=0, 1, \ldots, n-1$,
\begin{flalign}
\gcd(r_{i-1},r_i)&=\gcd(c_{i+1}\,r_i+s_{i+1}\,r_{i+1},r_i)=\gcd(s_{i+1}\,r_{i+1},r_i)\notag\\
&=\gcd(r_{i+1},r_i)=\gcd(r_i,r_{i+1})\notag
\end{flalign}
Hence, $\gcd(p,q)=\gcd(r_{-1},r_0)=\gcd(r_{n-1},r_n)=d$, and $d|r_i$.
\item For $i=0, 1, \ldots, n, \gamma_{i+1}=s_{i+1}(\beta_i\,r_i-\gamma_i) \implies \gamma_{i+1}=(-s_{i+1}\gamma_i)_d$.\\
Hence, $\gamma_{n+1}=((-1)^{n+1}s_1s_2\ldots s_{n+1}\gamma_0)_d$.\\
Since $d\nmid\gamma_{n+1}$, therefore $d\nmid \gamma_0$, i.e., $d\nmid a$.
\item Assume $r_{n+1}=0$. Then, $r_n=1$, or $r_n\ne1$.
\begin{enumerate}[a.)]
\item $r_n=1$\\
$\gcd(p,q)=1$, and therefore the Theorem \ref{th:theorem0} applies.
\item $r_n =d\ne1$. Assume $d|\gamma_{n+1}$, otherwise as seen above, $d\nmid a$ and there will be no solution.
Then let $\gamma_{m+1}=0$, where $m\le n$ and $\gamma_i\ne 0$, for $i=0,1,\ldots,m$.  Therefore, the equations in Theorem \ref{th:theorem0} will terminate at $i=m$.
\end{enumerate}
\end{enumerate}
\end{proof}

The algorithm for extended modular inverse detailed in Theorem \ref{th:theorem0} exhibits some flexibilities which can be exploited for optimization for various specific types of problems.
For example, if $a=1$ and all $s_i$'s are 1, then $\beta_i=1$ and the formula to calculate $f_i$'s is equivalent to $j_s=q_sj_{s-1}+j_{s-2}$ in \cite[p.20]{ding+pei+salomaa}, which is the Dayan Qiuyi Shu.
For this reason, Theorem \ref{th:theorem0} may be called a general Dayan algorithm.
Dayan algorithm has been known in China since the 13th century or earlier. However, except some brief mention of it in a few literatures such as \cite{ding+pei+salomaa}, the Dayan algorithm is rarely discussed in details in the English literature other than from a historical point of view.

Obviously, when $a=1$, this algorithm produces the ordinary modular inverse, i.e., $(q^{-1})_p$.

One may also notice a property of this algorithm is that it terminates when $\gamma_{n+1}=0$, even if $r_n\ne1$.
That means for some $a$'s, it does not require to perform the division operation until $r_n=1$.
To one extreme, for example, if $a=k\,q<p$, then Eq.\eqref{eq:extreciprocity} provides the answer as $k$ after the first division step when $\gamma_1=0$.  This is also apparent from the following equation :
\begin{equation}
(k\,q(q^{-1})_p)_p =(k(1+p\,q-p(p^{-1})_q))_p=k\notag
\end{equation}

For $a$'s of the form $a=k_1\,q-k_2\,r_1$, where $k_2 r_1<q$, the algorithm will terminate after the second division step, when $\gamma_2=0$.
$a$'s of other forms may also result in the algorithm to terminate before $r_n=1$.
Examples to illustrate this property will be shown at the end of this paper.

Note the flexibility in setting up $s_i$'s, one may also select each $s_i$ independently to produce $r_i$ to have the least absolute value. For the Euclidean Algorithm, it is known as the Method of the Least Absolute Remainder.

Among all different possible variations and flavours of this algorithm, some of the applications will be discussed in slightly greater details in the next section.

\section{Applications}

\subsection{Modular Inverse Formula}
\begin{corollary}\emph{(Modular Inverse Formula, first type)}
Let $p,q$ be positive integers, $1<q<p$ and co-prime to each other, the modular inverse is given by 
\begin{flalign}
\label{eq:modinvform1}
(q^{-1})_p&=p\sum_{i=0}^{n}\dfrac{(-1)^i}{r_{i-1}r_i}+\begin{cases} 0 & n=even \\p & n=odd \end{cases}\\
\label{eq:modinvform2}
(q^{-1})_p&=-p\sum_{i=0}^{\lfloor\frac{n+1}{2}\rfloor}\dfrac{c_{2i+1}}{r_{2i-1}r_{2i}}+\begin{cases} \dfrac{1}{r_{n-1}} & n=even \\p & n=odd \end{cases}
\end{flalign}
where $r_i=(r_{i-2})_{r_{i-1}}$ and $r_n=1$.
\end{corollary}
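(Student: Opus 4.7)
The plan is to establish (4.1) by induction on $n$ using the reciprocity framework of Theorem~\ref{th:reciprocity}, and then deduce (4.2) from (4.1) by telescoping adjacent pairs of terms. An alternative route would be to apply Theorem~\ref{th:theorem0} directly with $a=1$ and all $s_i=1$, producing $(q^{-1})_p=\sum_{i=0}^{n}\frac{p\,\gamma_i}{r_{i-1}r_i}$ and then showing $\gamma_i$ matches $(-1)^i$ up to a parity-dependent multiple of $p$; but the induction is cleaner and keeps the $\gamma_i$ implicit.

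For the base case $n=1$, one has $p=c_1q+1$, so $(q^{-1})_p=p-c_1$; a short calculation confirms that $p\bigl(\tfrac{1}{pq}-\tfrac{1}{q}\bigr)+p$ simplifies to $p-c_1$. For the inductive step with $n\ge 2$, I would apply Theorem~\ref{th:reciprocity} to the pair $(p,q)$ together with its Corollary, which gives $(p^{-1})_q=(r_1^{-1})_q$ since $q>1$. This produces the key identity
\[
(q^{-1})_p \;=\; p \;+\; \frac{1-p\,(r_1^{-1})_q}{q}.
\]
The pair $(q,r_1)$ has Euclidean sequence $q,r_1,r_2,\ldots,r_n=1$ of length $n-1$, so the induction hypothesis applies to $(r_1^{-1})_q$. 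After substitution and the index shift $j\mapsto j+1$ (which reindexes $\tilde r_{j-1}=r_j$), the isolated $\tfrac{1}{q}$ becomes exactly $\tfrac{p}{r_{-1}r_0}$ and restores the missing $i=0$ summand of the target sum, while the outer $+p$ and the inductive correction combine to give the claimed correction for the current $n$.

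The main obstacle is the parity bookkeeping: because the index shift flips the overall sign of the alternating sum, an ``even'' correction from the hypothesis feeds an ``odd'' case at the current step and vice versa, and one must verify that each of the four parity combinations collapses cleanly. Once the cases are laid out, the arithmetic is mechanical rather than substantive.

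For (4.2), I would start from (4.1) and pair consecutive summands via
\[
\frac{1}{r_{i-1}r_i}-\frac{1}{r_ir_{i+1}} \;=\; \frac{-\,c_{i+1}}{r_{i-1}r_{i+1}},
\]
which follows at once from $r_{i-1}=c_{i+1}r_i+r_{i+1}$. Pairing the summands $(0,1),(2,3),\ldots$ of the alternating sum collapses each pair into a single term with numerator $c_{2i+1}$. When $n$ is odd, all $n+1$ summands pair off; when $n$ is even, the last summand $\tfrac{p}{r_{n-1}r_n}=\tfrac{p}{r_{n-1}}$ is left unpaired and supplies the additive correction. Reconciling the resulting index ranges and denominators with the form printed in the statement is the last bookkeeping step, and is the place where I would scrutinize the derivation most carefully.
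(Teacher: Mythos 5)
Your argument for Eq.~\eqref{eq:modinvform1} is correct but takes a genuinely different route from the paper's. The paper instantiates Theorem~\ref{th:theorem0} with $a=1$, computes the sequence $\gamma_i$ in closed form ($\gamma_1=r_0-1$, $\gamma_i=r_{i-1}-r_i+(-1)^i$ for $2\le i\le n-1$, and a parity-dependent $\gamma_n$), and telescopes $\sum_i\gamma_i/(r_{i-1}r_i)$ into the alternating sum plus the correction term. You instead induct on the length of the remainder sequence, using the reciprocity identity $(q^{-1})_p=p+\bigl(1-p\,(p^{-1})_q\bigr)/q$ together with $(p^{-1})_q=(r_1^{-1})_q$ to descend from the pair $(p,q)$ to $(q,r_1)$; the paper explicitly remarks after this corollary that Eq.~\eqref{eq:modinvform1} ``can also be obtained through the Reciprocity formula,'' so you have carried out the alternative it only alludes to. Your parity worry is milder than you fear: the shifted $1/q$ term becomes the $i=0$ summand $p/(r_{-1}r_0)$, and the total correction is $p-p\epsilon/q$ with $\epsilon\in\{0,q\}$ the inductive correction for the pair of length $n-1$, which gives $p$ for $n$ odd and $0$ for $n$ even in one line each. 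Your route avoids the $\gamma_i$ machinery entirely; the paper's exhibits the formula as a direct specialization of its general algorithm. For Eq.~\eqref{eq:modinvform2} your pairing identity $\tfrac{1}{r_{i-1}r_i}-\tfrac{1}{r_ir_{i+1}}=\tfrac{-c_{i+1}}{r_{i-1}r_{i+1}}$ is exactly the paper's step. One caution: what both you and the paper's own proof actually obtain is $-p\sum_{i=0}^{\lfloor(n-1)/2\rfloor}c_{2i+1}/(r_{2i-1}r_{2i+1})$ with leftover $p/r_{n-1}$ when $n$ is even, whereas the displayed statement prints upper limit $\lfloor(n+1)/2\rfloor$, denominator $r_{2i-1}r_{2i}$, and correction $1/r_{n-1}$; your instinct to scrutinize that reconciliation is sound, since the printed form appears to contain typographical slips and it is the proof's version that checks out on the worked example $(106^{-1})_{189}=148$.
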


\begin{proof}
\begin{enumerate}[1.]
\item Equation\eqref{eq:modinvform1}\\
In Theorem \ref{th:theorem0}, let $a=1, s_i=-1$, then from Eq.\eqref{eq:extinvalgo1}, we obtain
$r_i=(r_{i-2})_{r_{i-1}}$ and $\gamma_0=1, \gamma_i=(-\gamma_{i-1})_{r_{i-1}}$.
\begin{enumerate}[i.)]
\item
First, we have to show $\gamma_{i}=r_{i-1}-r_i+(-1)^i$, for $i= 2,3, \ldots, n-1, and\\ \gamma_n=\begin{cases} (-1)^n-r_n & n=even\\r_{n-1}-r_n+(-1)^n & n=odd \end{cases}$.
\begin{enumerate}[(a)]
\item
$\gamma_1=(-1)_{r_0}=r_0-1$,\\
$\gamma_2=(-(r_0-1))_{r_1}=(1-(r_0)_{r_1})_{r_1}=(1-r_2))_{r_1}=r_1-r_2+1$\\
$\gamma_3=(-(r_1-r_2+1))_{r_2}=(-r_1-1)_{r_2}=(-r_3-1)_{r_2}=r_2-r_3-1$
\item
If it is true for $i=k$, then
$\gamma_{k+1}=(-\gamma_k)_{r_k}=(-(r_{k-1}-r_k+(-1)^k))_{r_k}=(-r_{k-1}-(-1)^k)_{r_k}=(-r_{k+1}+(-1)^{k+1})_{r_k}=r_k-r_{k+1}+(-1)^{k+1}$, since $-r_{k+1}+(-1)^{k+1}<0$.\\
Hence, this equation is true for $i=2,3, \ldots, n-1.$
\item
When $r_n=1, \gamma_n=(-\gamma_{n-1})_{r_{n-1}}=(-r_{n-2}+r_{n-1}-(-1)^{n-1})_{r_{n-1}}=(-r_{n-2}+1)_{r_{n-1}}=(-r_n+1)_{r_{n-1}}=0$, if $n=$even.
If $n=$odd, then $\gamma_n=(-r_n-1)_{r_{n-1}}=r_{n-1}-r_n-1$, since $-r_n-1<0$.
\end{enumerate}
\item
\begin{flalign}
\displaystyle\sum_{i=0}^{n}\dfrac{\gamma_i}{r_{i-1}r_i}&=\dfrac{1}{r_{-1}r_0}+\dfrac{r_0-1}{r_0 r_1}+\displaystyle\sum_{i=2}^{n}\dfrac{\gamma_i}{r_{i-1}r_i}\notag\\
&=\dfrac{1}{r_{-1}r_0}+\dfrac{r_0-1}{r_0 r_1}+\displaystyle\sum_{i=2}^{n-1}\dfrac{r_{i-1}-r_i+(-1)^i}{r_{i-1}r_i}+\dfrac{\gamma_n}{r_{n-1}r_n}\notag\\
&=\dfrac{1}{r_{-1}r_0}+\dfrac{1}{r_1}-\dfrac{1}{r_0 r_1}+\displaystyle\sum_{i=2}^{n-1}\left( \dfrac{1}{r_i} - \dfrac{1}{r_{i-1}} + \dfrac{(-1)^i}{r_{i-1}r_i} \right)+\dfrac{\gamma_n}{r_{n-1}r_n}\notag\\
&=\displaystyle\sum_{i=0}^{n-1}\dfrac{(-1)^i}{r_{i-1}r_i}+\dfrac{1}{r_{n-1}}+\dfrac{\gamma_n}{r_{n-1}r_n}\notag\\
&=\displaystyle\sum_{i=0}^{n-1}\dfrac{(-1)^i}{r_{i-1}r_i}+\dfrac{1}{r_{n-1}}+\begin{cases} \dfrac{(-1)^n-r_n}{r_{n-1}r_n} & n=even \\ \dfrac{r_{n-1}-r_n+(-1)^n}{r_{n-1}r_n} & n=odd \end{cases}\notag\\
&=\displaystyle\sum_{i=0}^{n}\dfrac{(-1)^i}{r_{i-1}r_i}+\begin{cases} 0 & n=even \\
1 & n=odd \end{cases}\notag
\end{flalign}
\end{enumerate}
\item Equation \eqref{eq:modinvform2}\\
Since 
\begin{flalign}
\displaystyle\sum_{i=0}^{2\lfloor \frac{n-1}{2} \rfloor+1}\dfrac{(-1)^i}{r_{i-1}r_i}&=\displaystyle\sum_{i=0}^{\lfloor\frac{n-1}{2}\rfloor}\left(\dfrac{(-1)^{2i}}{r_{2i-1}r_{2i}}+\dfrac{(-1)^{2i+1}}{r_{2i}r_{2i+1}} \right) \notag \\
&=\displaystyle\sum_{i=0}^{\lfloor\frac{n-1}{2}\rfloor}\left(\dfrac{(-1)^{2i}(r_{2i+1}-r_{2i-1})}{r_{2i-1}r_{2i}r_{2i+1}} \right) 
=\displaystyle\sum_{i=0}^{\lfloor\frac{n-1}{2}\rfloor}\dfrac{-c_{2i+1}}{r_{2i-1}r_{2i+1}} \notag
\end{flalign}
, then
$\displaystyle\sum_{i=0}^{n}\dfrac{(-1)^i}{r_{i-1}r_i}=\displaystyle\sum_{i=0}^{\lfloor\frac{n-1}{2}\rfloor}\dfrac{-c_{2i+1}}{r_{2i-1}r_{2i+1}} +\begin{cases} \dfrac{1}{r_{n-1}r_n} & n=even \\ 0 & n=odd\end{cases}$
\\Combining this with Eq.\eqref{eq:modinvform1}, and Eq.\eqref{eq:modinvform2} will be obtained.
\end{enumerate}
\end{proof}
In order not to keep track of the parity of $n$, we may simply calculate the value of $p\displaystyle\sum_{i=0}^{n}\dfrac{(-1)^i}{r_{i-1}r_i}$,
if it is negative, then add $p$ to obtain the modular inverse.
Eq. \eqref{eq:modinvform1} can also be obtained through the Reciprocity formula for ordinary modular inverse, since 
\begin{equation}
\dfrac{(q^{-1})_p}{p}+\dfrac{(p^{-1})_q}{q}=1+\dfrac{1}{p\,q}\notag
\end{equation}

\begin{corollary}\emph{(Modular Inverse Formula, sceond type)}
Let $p,q$ be positive integers, $1<q<p$ and co-prime to each other, the modular inverse is given by 
\begin{flalign}
\label{eq:modinvform3}
(q^{-1})_p&=p\sum_{i=0}^{n}\dfrac{1}{r_{i-1}r_i} \\
\label{eq:modinvform4}
(q^{-1})_p&=p\sum_{i=0}^{\lfloor\frac{n+1}{2}\rfloor}\dfrac{c_{2i+1}}{r_{2i-1}r_{2i}}+\begin{cases} \dfrac{1}{r_{n-1}} & n=even \\0 & n=odd \end{cases}
\end{flalign}
where $r_i=(-r_{i-2})_{r_{i-1}}$ and $r_n=1$.
\end{corollary}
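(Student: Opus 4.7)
The plan is to specialize Theorem~\ref{th:theorem0} to the case $a=1$ with all $s_i = -1$ and read off both identities. Under these choices, the recursions in Eq.~\eqref{eq:extinvalgo2} reduce to $r_i = (-r_{i-2})_{r_{i-1}}$, which is exactly the hypothesis of the corollary, and to $\gamma_i = (-s_i\gamma_{i-1})_{r_{i-1}} = (\gamma_{i-1})_{r_{i-1}}$. Starting from $\gamma_0 = (a)_p = 1$ and using that $r_{-1},r_0,\ldots,r_n$ is a strictly decreasing sequence of positive integers with $r_n = 1$, so that $r_{i-1} > 1$ for every $i \le n$, a one-line induction shows $\gamma_i = 1$ throughout. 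Substituting these values into Eq.~\eqref{eq:extmodinvform1} collapses the sum to
\[
(q^{-1})_p = (1\cdot(q^{-1})_p)_p = \sum_{i=0}^n \frac{p\,\gamma_i}{r_{i-1}r_i} = p\sum_{i=0}^n\frac{1}{r_{i-1}r_i},
\]
which is Eq.~\eqref{eq:modinvform3}.

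For Eq.~\eqref{eq:modinvform4} I would then combine the resulting sum two terms at a time. With $s_{i+1} = -1$, the basic recurrence $r_{i-1} = c_{i+1}r_i + s_{i+1}r_{i+1}$ becomes $r_{i-1} + r_{i+1} = c_{i+1}r_i$. This lets me merge consecutive pairs via
\[
\frac{1}{r_{2k-1}r_{2k}} + \frac{1}{r_{2k}r_{2k+1}} = \frac{r_{2k-1} + r_{2k+1}}{r_{2k-1}r_{2k}r_{2k+1}} = \frac{c_{2k+1}}{r_{2k-1}r_{2k+1}}.
\]
If $n$ is odd, every term is absorbed into such a pair; if $n$ is even, the final term $\frac{1}{r_{n-1}r_n} = \frac{1}{r_{n-1}}$ (using $r_n = 1$) is left over, producing the case distinction in Eq.~\eqref{eq:modinvform4}.

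No real obstacle appears: the only point requiring verification is $\gamma_i = 1$ for all relevant $i$, which is immediate from the strictly decreasing structure of the $r_i$. Everything else is a direct transcription of Theorem~\ref{th:theorem0} followed by a telescoping-style recombination of adjacent terms, whose algebra mirrors the first-type proof of Eq.~\eqref{eq:modinvform2} almost verbatim, but without the $(-1)^i$ factor and with the pairing identity $r_{2k-1} + r_{2k+1} = c_{2k+1}r_{2k}$ replacing $r_{2k-1} - r_{2k+1} = c_{2k+1}r_{2k}$.
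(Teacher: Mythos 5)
Your proof is correct and follows essentially the same route as the paper: specialize Theorem~\ref{th:theorem0} to $a=1$, $s_i=-1$ so that $\gamma_i=1$ throughout, read off Eq.~\eqref{eq:modinvform3} from Eq.~\eqref{eq:extmodinvform1}, and then pair adjacent terms via $r_{2k-1}+r_{2k+1}=c_{2k+1}r_{2k}$. The only discrepancy is that your pairing (correctly) produces denominators $r_{2k-1}r_{2k+1}$ with the sum running to $\lfloor (n-1)/2\rfloor$, which matches the paper's own computation for the first-type formula rather than the literal statement of Eq.~\eqref{eq:modinvform4}; that mismatch is a typo in the corollary as printed, not a flaw in your argument.
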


\begin{proof}
\begin{enumerate}[1.)]
\item Equation \eqref{eq:modinvform3}\\
In Theorem \ref{th:theorem0}, let $a=1, s_i=-1$, then from Eq.\eqref{eq:extinvalgo1}, we obtain
$r_i=(-r_{i-2})_{r_{i-1}}$ and $\gamma_i=(\gamma_{i-1})_{r_{i-1}}$. Since $\gamma_0=1$, we have $\gamma_i=1$ for all $i$'s.
Hence, the equation is proved.
\item Equation \eqref{eq:modinvform4}\\
Note that $\dfrac{r_{i-1}+r_{i+1}}{r_{i-1}r_{i}r_{i+1}}=\dfrac{c_{i+1}}{r_{i-1}r_{i+1}}$ and similar to the proof for Eq. \eqref{eq:modinvform2}, it is straightforward to prove Eq.\eqref{eq:modinvform4}.
\end{enumerate}
\end{proof}

With Eqs.\eqref{eq:modinvform1} and \eqref{eq:modinvform3}, it is necessary to keep track of only all the remainders and no quotients are needed.
Probably, they are the shortest or simplest formula (algorithm) for calculating modular inverse though the trade-off is that floating point divisions are used.
However, due to the granularity of floating point numbers, this may be useful for small numbers only.
Furthermore, at a first glance, it seems to suggest the Eq.\eqref{eq:modinvform3} is a better choice to calculate the modular inverse, since all the terms are positive and there is no need to keep track of the parity of $n$.
However, expanding $\dfrac{p}{q}$ into continued fractions will show that it will generally take more division steps for this formula.

\begin{corollary}
Let $p,q$ be positive integers, $1<q<p$ and co-prime to each other, if $r_i=(-r_{i-2})_{r_{i-1}}$ and $r_n=1$ then for $j=0,1,\ldots,n$
\begin{equation}
\label{eq:modinvcal}
(r_j(q^{-1})_p)_p=f_j
\end{equation}
\end{corollary}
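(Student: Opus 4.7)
The plan is to apply Theorem~\ref{th:theorem0} with input $a=r_j$ and with the sign choice $s_i=-1$ for every $i$. This choice is consistent with the $r_i$-recursion given in the corollary, since $(s_i\,r_{i-2})_{r_{i-1}}=(-r_{i-2})_{r_{i-1}}$, and it is also consistent with the $f_i$-recursion that is implicit here: $f_i=c_i f_{i-1}+s_{i-1}f_{i-2}$ becomes $f_i=c_i f_{i-1}-f_{i-2}$, with the initial values $f_0=1$, $f_1=c_1$. In particular, the $r_i$, $c_i$, and $f_i$ generated by the algorithm of Theorem~\ref{th:theorem0} are independent of the input $a$ and coincide with the sequences appearing in the statement.

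Specialising the update rules from Eq.~\eqref{eq:extinvalgo2} to $s_i=-1$ gives $\gamma_{i+1}=(\gamma_i)_{r_i}$ and $\beta_i=\lfloor\gamma_i/r_i\rfloor$. Setting $a=r_j$ produces $\gamma_0=(r_j)_p=r_j$ because $0\le r_j<p$. The remainders satisfy $r_{-1}>r_0>r_1>\cdots>r_n=1$, so $r_i>r_j$ for every $i<j$. A short induction then yields $\gamma_i=r_j$ for all $i=0,1,\ldots,j$: the base $\gamma_0=r_j$ is immediate, and if $\gamma_i=r_j$ with $i<j$, the inequality $r_i>r_j=\gamma_i$ forces $\beta_i=0$ and $\gamma_{i+1}=(\gamma_i)_{r_i}=r_j$. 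At the step $i=j$ we have $\gamma_j=r_j$, hence $\beta_j=\lfloor r_j/r_j\rfloor=1$ and $\gamma_{j+1}=0$; the algorithm therefore terminates at index $j$ via the $\gamma_{n+1}=0$ condition of Theorem~\ref{th:theorem0}.

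Finally, substituting into Eq.~\eqref{eq:extmodinvform2} gives
\[
(r_j(q^{-1})_p)_p=\sum_{i=0}^{j}f_i\beta_i=f_j,
\]
since $\beta_i=0$ for all $i<j$ and $\beta_j=1$. The only substantive step is the induction keeping $\gamma_i$ constant at $r_j$ until step $j$, and this is essentially immediate from the strict decrease of the $r_i$. No significant obstacle is anticipated; the case $j=0$, giving $(q(q^{-1})_p)_p=f_0=1$, is handled uniformly by the same argument.
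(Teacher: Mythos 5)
Your proposal is correct and takes essentially the same route as the paper: specialise Theorem~\ref{th:theorem0} to $a=r_j$ with $s_i=-1$, observe that $\gamma_i$ stays equal to $r_j$ (so $\beta_i=0$) until step $j$, where $\beta_j=1$ and $\gamma_{j+1}=0$, and read off $f_j$ from Eq.~\eqref{eq:extmodinvform2}. Your explicit induction on $\gamma_i$ is a slightly more careful write-up of the step the paper asserts directly from $r_j<r_{j-1}<\cdots<r_0$, and you correctly use $\beta_i=\lfloor\gamma_i/r_i\rfloor$ as in Eq.~\eqref{eq:extinvalgo1}.
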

\begin{proof}
In Theorem \ref{th:theorem0}, let $s_i=-1$.  Then, $\gamma_0=r_j, \gamma_i=(\gamma_{i-1})_{r_{i-1}}$ and $\beta_i=-\Bigl \lceil -\dfrac{\gamma_i}{r_{i-1}} \Bigl \rceil = \Bigl \lfloor \dfrac{\gamma_i}{r_{i-1}} \Bigl \rfloor$.
Since $r_j<r_{j-1}< \ldots < r_1<r_0=q$, then $\beta_0=\beta_1=\ldots=\beta_{j-1}=0, \beta_j=1$, and $\beta_{j+1}=\ldots=\beta_n=0$.
Hence, from Eq. \eqref{eq:extmodinvform2},\\
\centerline{$(r_j(q^{-1})_p)_p=\displaystyle\sum_{i=0}^{n} f_i\beta_i=f_j$}
\end{proof}
Furthermore, from Eq.\eqref{eq:modinvcal}, it can easily be deduced that, for $k\in\mathbb{Z}$,
\begin{equation}
(k\,r_j(q^{-1})_p)_p=(k\,f_j)_p\notag
\end{equation}

\subsection{Chinese Remainder Algorithm}
The next example demonstrates the application of extended modular inverse to the linear congruence equations, i.e., the Chinese Remainder Theorem.
This algorithm is the $n$-congruence iterative CRA from \cite[p.23]{ding+pei+salomaa}, and it is also known as Garner's formula.

\begin{theorem}\label{th:CRT}\emph{(Chinese Remainder Theorem)}
Let $m_1,m_2, \ldots, m_n$ be pair-wise relatively prime and greater than 1, and $a_1, a_2, \ldots$ , $a_n$ are integers less than $m_1, m_2, \ldots , m_n$ respectively.

\begin{equation}\label{eq:CRTeq}
\left.\begin{aligned}
x&\equiv a_1 \pmod{m_1}\\
x&\equiv a_2 \pmod{m_2}\\
&\ldots \ldots\\
&\ldots \ldots\\
x&\equiv a_n \pmod{m_n}
\end{aligned}\right\}
\end{equation}

The solution of Eq.\eqref{eq:CRTeq} is given by, $x=x_n$ from the following iterative algorithm :
\begin{equation}
\left.\begin{aligned}
x_1&=a_1\\
x_2&=m_1((a_2-x_1)(m_{1}^{-1})_{m_2})_{m_2}+x_1\\
&\ldots \ldots\\
&\ldots \ldots\\
x_n&=m_1m_2 \ldots m_{n-1}((a_n-x_{n-1})((m_1m_2 \ldots m_{n-1})^{-1})_{m_n})_{m_n}+x_{n-1}
\end{aligned}\right\}\notag
\end{equation}
\end{theorem}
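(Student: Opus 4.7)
The plan is to prove the theorem by induction on the index $k$, showing that at every stage the partial solution $x_k$ simultaneously satisfies the first $k$ congruences of the system (\ref{eq:CRTeq}). The base case $x_1 = a_1$ is immediate, since $a_1 < m_1$ gives $x_1 \equiv a_1 \pmod{m_1}$ trivially. For the inductive step I would assume that $x_{k-1}$ satisfies $x_{k-1} \equiv a_i \pmod{m_i}$ for $i = 1,\dots,k-1$, and then verify that the update
\[
x_k = m_1m_2\cdots m_{k-1}\bigl((a_k - x_{k-1})\bigl((m_1m_2\cdots m_{k-1})^{-1}\bigr)_{m_k}\bigr)_{m_k} + x_{k-1}
\]
preserves these $k-1$ congruences and in addition satisfies $x_k \equiv a_k \pmod{m_k}$.

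For the first part, observe that $x_k - x_{k-1}$ is an integer multiple of $m_1m_2\cdots m_{k-1}$, and hence of $m_i$ for each $i < k$. Therefore $x_k \equiv x_{k-1} \equiv a_i \pmod{m_i}$ for $i = 1,\dots,k-1$, by the inductive hypothesis. For the congruence modulo $m_k$, the pairwise coprimality of the moduli guarantees $\gcd(m_1\cdots m_{k-1}, m_k) = 1$, so the ordinary modular inverse $\bigl((m_1\cdots m_{k-1})^{-1}\bigr)_{m_k}$ is defined, and by property (i) of the extended modular inverse together with the defining congruence (\ref{eq:congruence}),
\[
m_1\cdots m_{k-1}\bigl((a_k - x_{k-1})(m_1\cdots m_{k-1})^{-1}\bigr)_{m_k} \equiv a_k - x_{k-1} \pmod{m_k}.
\]
Adding $x_{k-1}$ to both sides gives $x_k \equiv a_k \pmod{m_k}$, completing the inductive step.

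The argument is essentially bookkeeping, so there is no single hard obstacle; the only subtlety is making sure the modular inverse in the formula is well-defined, which is precisely where the pairwise coprimality hypothesis enters. Uniqueness of the solution modulo $m_1 m_2 \cdots m_n$ is the classical content of the Chinese Remainder Theorem and, if desired, can be mentioned briefly: any two solutions differ by a multiple of each $m_i$ and hence, by pairwise coprimality, by a multiple of their product.
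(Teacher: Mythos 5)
Your proof is correct and follows essentially the same route as the paper: verify that the update term is a multiple of $m_1\cdots m_{k-1}$ (so earlier congruences are preserved) and use the defining property of the extended modular inverse to get the new congruence modulo $m_k$. If anything, your version is more complete — the paper only writes out the two-modulus step (using the reciprocity formula \eqref{eq:reciprocity} to reduce $m_1(m_1^{-1})_{m_2}$ modulo $m_2$) and leaves the induction implicit, whereas you spell it out in full.
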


\begin{proof}
It suffices to shows that $x_2$ satisfies the first two congruence equations.\\
It is obvious that $x_2$ satisfies the first congruence equation,
and we need to show the second congruence equation is also satisfied.

\begin{flalign}
(x_2)_{m_2}&=(m_1((a_2-x_1)(m_{1}^{-1})_{m_2})_{m_2}+x_1)_{m_2}\notag\\
&=((m_1(a_2-x_1)(m_{1}^{-1})_{m_2})_{m_2}+x_1)_{m_2}\notag\\
&=(((a_2-x_1)(1+m_1 m_2-m_2(m_{2}^{-1})_{m_1}))_{m_2}+x_1)_{m_2}\notag\\
&=((a_2-x_1)+x_1)_{m_2}=(a_2)_{m_2}=a_2\notag
\end{flalign}
\end{proof}
This algorithm requires only $(n-1)$ extended modular inverses, and more importantly, extended modular inverse calculation statistically requires less division steps,
though detailed efficiency analysis still need to be studied.

\begin{theorem}\emph{(Chinese Remainder Theorem, non-co-prime moduli)}\\
Let $m_1,m_2$ be integers greater than 1, not necessarily co-prime to each other, and $a_1, a_2$ are integers less than $m_1, m_2$ respectively.
\begin{equation*}
\left.\begin{aligned}
x&\equiv a_1 \pmod{m_1}\\
x&\equiv a_2 \pmod{m_2}
\end{aligned}\right\}\notag
\end{equation*}
Let $d=\gcd(m_1,m_2)$, and $d|(a_1-a_2)$, then, the solution is given by :
\begin{equation}\label{eq:CRTsol}
x_2=m_1((a_2-a_1)(m_1^{-1})_{m_2})_{m_2}+a_1+k\dfrac{m_1 m_2}{d}
\end{equation}
where $k\in\mathbb{Z}$ and by Corollary \ref{th:corol1}, with $a=(a_2-a_1)_{m_2}$,\\
\centerline{$((a_2-a_1)(m_{1}^{-1})_{m_2})_{m_2}=\left(\left(\dfrac{a}{d}\right)\left(\dfrac{m_1}{d}\right)^{-1}_{\frac{m_2}{d}}\right)_{\frac{m_2}{d}}$.}
\end{theorem}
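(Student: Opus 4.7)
The plan is to mirror the proof of Theorem \ref{th:CRT} for the coprime case, with two adjustments needed for the non-coprime setting: (i) the extended modular inverse must be interpreted via Definition 3.1 and computed via Corollary \ref{th:corol1}, which is where the hypothesis $d\mid(a_1-a_2)$ becomes essential; and (ii) solutions are only unique modulo $\operatorname{lcm}(m_1,m_2)=m_1m_2/d$, which accounts for the extra term $k\,m_1m_2/d$ in the stated formula.

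My first step is to unpack $u:=((a_2-a_1)(m_1^{-1})_{m_2})_{m_2}$. Setting $a=(a_2-a_1)_{m_2}$, the hypothesis $d\mid(a_1-a_2)$ forces $d\mid a$, so Definition 3.1 together with Corollary \ref{th:corol1} asserts that $u$ is well-defined and equals
\[
u=\left(\tfrac{a}{d}\left(\tfrac{m_1}{d}\right)^{-1}_{m_2/d}\right)_{m_2/d};
\]
in particular $0\le u<m_2/d$ and $\tfrac{m_1}{d}\,u\equiv\tfrac{a_2-a_1}{d}\pmod{m_2/d}$. Writing this congruence as an equality $\tfrac{m_1}{d}u=\tfrac{a_2-a_1}{d}+j\,\tfrac{m_2}{d}$ for some $j\in\mathbb Z$ and multiplying through by $d$ yields the key identity $m_1u\equiv a_2-a_1\pmod{m_2}$.

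With this in hand, verifying both congruences for $x_2=m_1u+a_1+k\,m_1m_2/d$ is immediate. Modulo $m_1$ the first and third terms are divisible by $m_1$, leaving $x_2\equiv a_1\pmod{m_1}$. Modulo $m_2$ the third term drops out since $m_2\mid m_1m_2/d$, and the identity of the previous step gives $x_2\equiv m_1u+a_1\equiv a_2\pmod{m_2}$. Hence every $x_2$ of the stated form solves the system.

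For completeness I would finish by observing that any two solutions of the system differ by an integer divisible by both $m_1$ and $m_2$, hence by $\operatorname{lcm}(m_1,m_2)=m_1m_2/d$; since $m_1u+a_1$ is a particular solution (the $k=0$ case), every solution must take the form in (4.10) for some $k\in\mathbb{Z}$. The only genuinely delicate point is lifting the congruence from modulus $m_2/d$ to modulus $m_2$, and this is exactly where Corollary \ref{th:corol1} is indispensable: it certifies that in the non-coprime case the extended modular inverse can in fact be identified with the solution of the reduced congruence modulo $m_2/d$, which is what allows the multiplication-by-$d$ trick to go through cleanly.
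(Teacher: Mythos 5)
Your proposal is correct and follows essentially the same route as the paper: interpret the extended modular inverse via the reduced congruence modulo $m_2/d$, multiply through by $d$ to recover $m_1u\equiv a_2-a_1\pmod{m_2}$, and then check both congruences directly. The closing uniqueness remark (solutions differ by $\operatorname{lcm}(m_1,m_2)$) is a small addition the paper leaves implicit, but otherwise the arguments coincide.
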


\begin{proof}
This solution is obviously true for co-prime moduli, by Theorem \ref{th:CRT}.\\
Let's assume $\gcd(m_1,m_2)=d\ne1$ and $d|a$.\\
It is obvious to see $(x_2)_{m_1}=a_1$.
\begin{flalign}
(x_2)_{m_2}&=\left(m_1(a(m_{1}^{-1})_{m_2})_{m_2}+a_1+k\dfrac{m_1 m_2}{d}\right)_{m_2}\notag\\
&=\left(d\dfrac{m_1}{d}\left(\left(\dfrac{a}{d}\right)\left(\dfrac{m_1}{d}\right)^{-1}_{\frac{m_2}{d}}\right)_{\frac{m_2}{d}}+a_1\right)_{m_2}
=\left(d\left(\dfrac{a}{d}+\dfrac{m_2}{d}k_2\right)+a_1\right)_{m_2}\notag\\
&=((a_2-a_1)_{m_2}+a_1)_{m_2}=(a_2)_{m_2}\notag
\end{flalign}
\end{proof}

With Eq.\eqref{eq:CRTsol} as the solution of a two-modulus Chinese Remainder Algorithm, it is possible to parallelize the algorithm for improved efficiency.
For example, there is a set of 16 simultaneous linear congruence equations, not necessarily all co-prime to each other.
We may first calculate the 8 solutions for 8 sets of two-modulus simultaneous linear congruence equations, in parallel independently.
With these 8 solutions, they can be processed by 4 parallel threads or processors, and then two threads in the next round.
Thus, in general, it takes $\lceil\operatorname{Log}_2(n)\rceil$ rounds of parallel computations of a two-modulus Chinese Remainder Algorithm.

\section{Examples}

\begin{table}[t]
\centering
\begin{tabular}{ | r | r | r | r | r  | r | r | r | r | r |}    \hline
    Step $i$                                   &  -1 &   0 &   1 &   2 &  3 & 4  &  5 &  6 &  7   \\ \hline
\hline
$r_i=(r_{i-2})_{r_{i-1}}$                      & 189 & 106 &  83 &  23 & 14 &  9 &  5 &  4 &  1   \\ \hline
$\gamma_i=(\gamma_{i-1})_{r_{i-1}}$            &     &   1 & 105 &  61 &  8 &  6 &  3 &  2 &  2   \\ \hline
$c_i=\lfloor \frac{r_{i-2}}{r_{i-1}}\rfloor$   &     &     &   1 &  1  &  3 &  1 &  1 &  1 &  1   \\ \hline
$\beta_i=\lceil\frac{\gamma_i}{r_i}\rceil$     &     &   1 &   2 &  3  &  1 &  1 &  1 &  1 &  2   \\ \hline
$f_i=c_i f_{i-1}+f_{i-2}$                      &   0 &   1 &   1 &  2  &  7 &  9 & 16 & 25 & 41   \\ \hline
\end{tabular}
\caption{Illustration for Extended Modular Inverse Algorithm to find out $(106^{-1})_{189}$ with $s_i=1$}
\label{tab:XMIAlg}
\end{table}

Here are some examples to demonstrate the use of some algorithms and formulas discussed in this paper.

Table \ref{tab:XMIAlg} gives the results generated by the Extended Modular Inverse Algorithm detailed in Theorem \ref{th:theorem0}.

\begin{example} Equation \eqref{eq:extmodinvform2}
\begin{flalign}
(106^{-1})_{189}&=\displaystyle\sum_{i=0}^{n} f_i\beta_i \notag\\
&=1 \cdot 1+1\cdot2+2\cdot3+7\cdot1+9\cdot1+16\cdot1+25\cdot1+41\cdot2\notag\\
&=148\notag
\end{flalign}

\end{example}

\begin{example} Equation \eqref{eq:extmodinvform1}
\begin{flalign}
(106^{-1})_{189}&=p\displaystyle\sum_{i=0}^{n} \dfrac{\gamma_i}{r_{i-1}r_i}\notag\\
&=189(\dfrac{1}{189\cdot106}+\dfrac{105}{106\cdot83}+\dfrac{61}{83\cdot23}+\dfrac{8}{23\cdot14}+\dfrac{6}{14\cdot9}\notag\\&+\dfrac{3}{9\cdot5}+\dfrac{2}{5\cdot4}+\dfrac{2}{4\cdot1})\notag\\
&=148\notag
\end{flalign}

\end{example}

\begin{example} Equation \eqref{eq:modinvform1}
\begin{flalign}
(106^{-1})_{189}&=p\displaystyle\sum_{i=0}^{n} \dfrac{(-1)^i}{r_{i-1}r_i}\notag\\
&=189(\dfrac{1}{189\cdot106}-\dfrac{1}{106\cdot83}+\dfrac{1}{83\cdot23}-\dfrac{1}{23\cdot14}+\dfrac{1}{14\cdot9}\notag\\&-\dfrac{1}{9\cdot5}+\dfrac{1}{5\cdot4}-\dfrac{1}{4\cdot1})\notag\\
&=-41\notag
\end{flalign}
Since it gives a negative value, and also note $n=7=odd$, and therefore we have to add $p$ to it, and hence,
$(106^{-1})_{189}=-41+189=148$.

\end{example}

\begin{example} Non-identical $s_i$'s\\
Table \ref{tab:XMImode0} shows the results for extended modular inverse calculation with different $s_i$ in each step.

\begin{flalign}
(106^{-1})_{189}&=\displaystyle\sum_{i=0}^{n} f_i\beta_i \notag\\
&=1 \cdot 0+2\cdot0+9\cdot0+25\cdot1+41\cdot3\notag\\
&=148\notag\\
(106^{-1})_{189}&=189\left(\dfrac{1}{189\cdot106}+\dfrac{1}{106\cdot23}+\dfrac{1}{23\cdot9}+\dfrac{1}{9\cdot4}+\dfrac{3}{4\cdot1}\right)\notag\\
&=148\notag
\end{flalign}

\begin{table}[t]
\centering
\begin{tabular}{ | r | r | r | r | r  | r | r | }    \hline
    Step $i$                                                 &  -1 &   0 &   1 &   2 &  3 &  4         \\ \hline
\hline
$r_i=(s_i\,r_{i-2})_{r_{i-1}}$                               & 189 & 106 &  23 &   9 &  4 &  1         \\ \hline
$\gamma_i=(-s_i\,\gamma_{i-1})_{r_{i-1}}$                    &     &   1 &   1 &   1 &  1 &  3         \\ \hline
$s_i$                                                        &     &     &  -1 &  -1 & -1 &  1         \\ \hline
$c_i=s_i\lfloor s_i\frac{r_{i-2}}{r_{i-1}}\rfloor$           &     &     &   2 &   5 &  3 &  2         \\ \hline
$\beta_i=s_{i+1}\lceil s_{i+1}\frac{\gamma_i}{r_i}\rceil$    &     &   0 &   0 &   0 &  1 &  3         \\ \hline
$f_i=c_i f_{i-1}+s_{i-1}f_{i-2}$                             &   0 &   1 &   2 &   9 & 25 & 41         \\ \hline
\end{tabular}
\caption{Illustration for Extended Modular Inverse Algorithm to find out $(106^{-1})_{189}$ with varying $s_i$'s.}
\label{tab:XMImode0}
\end{table}

\end{example}

\begin{example} Chinese Remainder Algorithm\\
We are going to solve the following simultaneous congruence equations :
\begin{equation*}
\left.\begin{aligned}
x&\equiv  5 &\pmod{106}\\
x&\equiv 51 &\pmod{189}
\end{aligned}\right\}\notag
\end{equation*}

\begin{table}[t]
\centering
\begin{tabular}{ | r | r | r | r | r  | r | r | r | r | r |}    \hline
    Step $i$                                &  -1 &   0 &   1 &   2 &  3 & 4  &  5 &  6 &  7   \\ \hline
\hline
$r_i=(r_{i-2})_{r_{i-1}}$                   & 189 & 106 &  83 &  23 & 14 &  9 &  5 &  4 &  1   \\ \hline
$\gamma_i=(-\gamma_{i-1})_{r_{i-1}}$        &     &  46 &  60 &  23 &  0 &    &    &    &   \\ \hline
$c_i=\lfloor \frac{r_{i-2}}{r_{i-1}}\rfloor$&     &     &   1 &  1  &  3 &    &    &    &      \\ \hline
$\beta_i=\lceil\frac{\gamma_i}{r_i}\rceil$  &     &   1 &   1 &   1 &  0 &    &    &    &   \\ \hline
$f_i=c_i f_{i-1}+f_{i-2}$                   &   0 &   1 &   1 &   2 &  7 &    &    &    &   \\ \hline
\end{tabular}
\caption{Illustration for Extended Modular Inverse Algorithm to find out $(46(106^{-1})_{189})_{189}$ with $s_i=1$}
\label{tab:CRT}
\end{table}

Note we have to find the extended modular inverse $(46(106^{-1})_{189})_{189}$, by Eq.\eqref{eq:extmodinvform2}, and using the result from the Extended Modular Inverse algorithm shown in Table \ref{tab:CRT}, we get
\begin{flalign}
(46(106^{-1})_{189})_{189}&=1\cdot1+1\cdot1+2\cdot1+3\cdot0=4\notag\\
\notag\\
x&=m_1((a_2-x_1)(m_{1}^{-1})_{m_2})_{m_2}+x_1\notag\\
&=106((51-5)(106^{-1})_{189})_{189}+5=106(46(60^{-1})_{189})_{189}+5\notag\\
&=106\cdot4+5=429\notag
\end{flalign}

Comparing Tables \ref{tab:XMIAlg} and \ref{tab:CRT}, we notice that, if it is already known that the two numbers are co-prime to each other, it takes only three steps in calculating $(46(106^{-1})_{189})_{189}$,
whereas it takes 8 steps to calculate $(106^{-1})_{189}$.
If it is not known in advance whether the two moduli are co-prime to each other or not, it is still necessary to carry out the algorithm for $r_i$'s until either $1$ or $0$ is reached.

\end{example}

\begin{example} Non-co-prime Chinese Remainder Theorem\\
The following example demonstrates the efficiency of the algorithm for non-co-prime Chinese Remainder Theorem.  There is no need to pre-determine whether the two moduli are co-prime with each other, and $\gcd(m_1,m_2)$ will be found as part of the algorithm.
The solution may also be found even before GCD is found.

\begin{table}[t]
\centering
\begin{tabular}{ | r | r | r | r | r  | r | r | r | r | r || r |}    \hline
    Step $i$                                   &  -1 &   0 &   1 &   2 &   3 &   4 &  5 &  6 &  7 &  8   \\ \hline
\hline
$r_i=(r_{i-2})_{r_{i-1}}$                      & 945 & 530 & 415 & 115 &  70 &  45 & 25 & 20 &  5 &  0   \\ \hline
$\gamma_i=(-\gamma_{i-1})_{r_{i-1}}$           &     & 230 & 300 & 115 &   0 &     &    &    &    &      \\ \hline
$c_i=\lfloor \frac{r_{i-2}}{r_{i-1}}\rfloor$   &     &     &   1 &  1  &   3 &     &    &    &    &      \\ \hline
$\beta_i=\lceil\frac{\gamma_i}{r_i}\rceil$     &     &   1 &   1 &  1  &   0 &     &    &    &    &      \\ \hline
$f_i=c_i f_{i-1}+f_{i-2}$                      &   0 &   1 &   1 &  2  &   7 &     &    &    &    &      \\ \hline
\end{tabular}
\caption{Illustration for Extended Modular Inverse Algorithm to find out $(230(530^{-1})_{945})_{945}$ with $s_i=1$}
\label{tab:CRT2}
\end{table}

\begin{equation*}
\left.\begin{aligned}
x&\equiv 79 &\pmod{530}\\
x&\equiv 309 &\pmod{945}
\end{aligned}\right\}\notag
\end{equation*}

With Table \ref{tab:CRT2}, it is found that
\begin{flalign}
(230(530^{-1})_{945})_{945}&=1\cdot1+1\cdot1+2\cdot1=4\notag\\
\gcd(530,945)&=5\notag
\end{flalign}
The solution is :
\begin{flalign}
x&=530(230(530^{-1})_{945})_{945}+79+k\dfrac{530\cdot945}{5}\notag\\
&=530\cdot4+79+100170k=2199+100170k\notag
\end{flalign}
As a verification, we get\\
$(2199+100170k)_{530}=79,\\
(2199+100170k)_{945}=309$.
\end{example}

\section{Conclusions}
We have introduced a modified modular inverse and a new extended modular inverse and proved the recipocity formulas for them.
An algorithm is found to calculate the extended modular inverse, and for some specific sets of numbers, it is more efficient than the classical extended Euclidean algorithm.
It is also applicable to the Chinese Remainder problem, with co-prime moduli as well as non-co-prime moduli.

\bibliographystyle{amsplain}
\bibliography{mybib} 

\end{document}